\newtheorem{theorem}{Theorem}[section]
\newtheorem{cor}[theorem]{Corollary}
\newtheorem{prop}[theorem]{Proposition}
\theoremstyle{definition}
\newtheorem{definition}[theorem]{Definition}
\newtheorem{problem}{Problem}
\theoremstyle{remark}
\newtheorem{remark}[theorem]{Remark}
\numberwithin{equation}{section}
\newcommand{\N}{\mathbb N}
\title[Between $\mathcal{U}$-frequent hypercyclicity and frequent hypercyclicity]{A bridge between $\mathcal{U}$-frequent hypercyclicity and frequent hypercyclicity}
\author[Q. Menet]{Quentin Menet}
\address{Quentin Menet, Univ. Artois, EA 2462, Laboratoire de Mathématiques de Lens (LML), F-62300 Lens, France}
\email{quentin.menet@univ-artois.fr}
\thanks{The author was supported by the grant ANR-17-CE40-0021 of the French National 
Research Agency ANR (project Front)}
\subjclass[2010]{47A16}
\keywords{}
\begin{document}
\begin{abstract}
Given $\mathcal{A}$ the family of weights $a=(a_n)_n$ decreasing to $0$ such that the series $\sum_{n=0}^{\infty} a_n$ diverges, we show that the supremum on $\mathcal{A}$ of lower weighted densities coincides with the unweighted upper density and that the infimum on $\mathcal{A}$ of upper weighted densities coincides with the unweighted lower density. We then investigate the notions of $\mathcal{U}$-frequent hypercyclicity and frequent hypercyclicity associated to these weighted densities. We show that there exists an operator which is $\mathcal{U}$-frequently hypercyclic for each weight in $\mathcal{A}$ but not frequently hypercyclic, although the set of frequently hypercyclic vectors always coincides with the intersection of sets of $\mathcal{U}$-frequently hypercyclic vectors for each weight in $\mathcal{A}$.
\end{abstract}
\maketitle

Let $a=(a_n)_{n\ge 0}$ be a sequence of positive real numbers such that $\sum_{n=0}^{\infty}a_n=\infty$. We define the upper density $\overline{d_a}$ and the lower density $\underline{d_a}$ by
\[\overline{d_a}(I)=\limsup_{N\to \infty} \frac{\sum_{n\in [0,N]\cap I}a_n}{\sum_{n=0}^{N}a_n} \quad \text{and}\quad \underline{d_a}(I)=\liminf_{N\to \infty} \frac{\sum_{n\in [0,N]\cap I}a_n}{\sum_{n=0}^{N}a_n}.\]

In particular, if $a_n=1$ for every $n\ge 0$, we get the upper and lower unweighted densities. We know thanks to Ernst and Mouze \cite{Ernst} that if $(a_n/b_n)$ decreases to $0$ then for every $I\subset \mathbb{N}$,\[\underline{d_b}(I)\le \underline{d_a}(I)\le \overline{d_a}(I)\le \overline{d_b}(I).\]
In their paper, Ernst and Mouze \cite{Ernst} are interested in the lower weighted densities $\underline{d_a}$ smaller than the lower unweighted density $\underline{d}$. In this paper, we will be interested in the weighted densities between the unweighted densities $\underline{d}$ and $\overline{d}$. To this end, we will focus on the densities $\underline{d_a}$ and $\overline{d_a}$ where $a$ is a decreasing sequence tending to $0$ such that $\sum_{n=0}^{\infty}a_n=\infty$. We denote by $\mathcal{A}$ the set of these sequences. Our interest in these densities comes from the study of two important notions in linear dynamics: $\mathcal{U}$-frequent hypercyclicity and frequent hypercyclicity.

Given $X$ a separable infinite-dimensional Fréchet space and $T$ a continuous linear operator on $X$, the orbit of a vector $x$ in $X$ under the action of $T$ is given by the set $\text{Orb}(x,T)=\{T^nx:n\ge 0\}$.
Linear dynamics is the theory investigating the properties of these orbits (see \cite{Bbook} and \cite{Kbook} for more information). For instance, we say that $T$ is hypercyclic if there exists $x\in X$ such that $\text{Orb}(x,T)$ is dense, or equivalently, such that for each non-empty open set $U$, the set $N(x,U)=\{n\ge 0:T^nx\in U\}$ is non-empty.

If the orbit of a vector visits each non-empty open set then this orbit visits infinitely often each non-empty open set and we can investigate the frequency of these visits. This study has been started by Bayart and Grivaux~\cite{2Bayart0, 2Bayart} via the notion of frequently hypercyclic operators. An operator $T$ is said to be frequently hypercyclic (FHC) if there exists a vector $x$ (called a frequently hypercyclic vector) such that for each non-empty open set $U$, we have $\underline{d}(N(x,U))>0$. In the same way, Shkarin~\cite{Shkarin} introduced the notion of $\mathcal{U}$-frequently hypercyclic operators (UFHC) by replacing the lower density by the upper density. These two notions of hypercyclicity have interesting differences. For instance, the set $UFHC(T)$ of $\mathcal{U}$-frequently hypercyclic vectors is either empty or residual~\cite{BayartR, GrivM, Moot} but the set $FHC(T)$ of frequently hypercyclic vectors is always meager~\cite{BayartR}. For this reason, we try to create a bridge between these two notions in the hope to better understand their differences and their limits.

\begin{definition}
Let $X$ be a separable infinite-dimensional Fréchet space, $T$ a continuous linear operator on $X$ and $a\in \mathcal{A}$.
\begin{itemize}
\item $T$ is said to be frequently hypercyclic with respect to $a$ (in short, $FHC_a$) if there exists $x\in X$ such that for every non-empty open set $U$, we have $\underline{d_a}(N(x,U))>0$.
\item $T$ is said to be $\mathcal{U}$-frequently hypercyclic with respect to $a$ (in short, $UFHC_a$) if there exists $x\in X$ such that for every non-empty open set $U$, we have $\overline{d_a}(N(x,U))>0$.
\end{itemize}
We denote by $FHC_a(T)$ (resp. $UFHC_a(T)$) the set of vectors which are frequently hypercyclic (resp $\mathcal{U}$-frequently hypercyclic) with respect to $a$ for $T$.
\end{definition}
 In view of the result of Ernst and Mouze \cite{Ernst}, we have for every $a\in \mathcal{A}$
\[FHC\quad\Rightarrow\quad FHC_a \quad\Rightarrow\quad UFHC_a \quad\Rightarrow\quad UFHC.\]
We can therefore wonder if the notions of $FHC_a$ and $UFHC_a$ allow to create a bridge between UFHC and FHC and which of these three possibilities is correct:
\begin{enumerate}
\item $\sup_{a\in \mathcal{A}}\underline{d}_{a}<\inf_{a\in \mathcal{A}}\overline{d}_{a}$: It means that there is a gap between the weighted lower and upper densities and we would have to investigate this gap.
\item $\sup_{a\in \mathcal{A}}\underline{d}_{a}=\inf_{a\in \mathcal{A}}\overline{d}_{a}$: The weighted lower and upper densities share a limit density which would be particularly interesting.
\item $\sup_{a\in \mathcal{A}}\underline{d}_{a}>\inf_{a\in \mathcal{A}}\overline{d}_{a}$: We can get a bridge between UFHC and FHC by using weighted lower densities and weighted upper densities.
\end{enumerate}

We will show in Section~\ref{Sec1} that the correct possibility is the third. In fact, we can show that $\sup_{a\in \mathcal{A}}\underline{d}_{a}=\overline{d}$ and that $\inf_{a\in \mathcal{A}}\overline{d}_{a}=\underline{d}$. These equalities will result in the following ones for $\mathcal{U}$-frequent hypercyclicity and frequent hypercyclicity:
 \[UFHC(T)=\bigcup_{a\in \mathcal{A}}FHC_a(T) \quad \text{and}\quad FHC(T)=\bigcap_{a\in \mathcal{A}}UFHC_a(T).\]
In particular, every operator $\mathcal{U}$-frequently hypercyclic is $FHC_a$ for some $a\in \mathcal{A}$. It is natural to wonder if an operator $UFHC_a$ for every $a\in \mathcal{A}$ is also frequently hypercyclic. This implication would allow us to use Baire arguments in the study of frequent hypercyclicity in view of results obtained by Bonilla and Grosse-Erdmann in~\cite{Bon}. Unfortunately, we will show in Section~\ref{Sec2} that there exist operators on $\ell_1(\mathbb{N})$ which are $UFHC_a$ for every $a\in \mathcal{A}$ but not frequently hypercyclic. However, we do not know if such an operator can also be found in Hilbert spaces.

\begin{problem}
Does there exist an operator on $\ell_p(\mathbb{N})$ for $1<p<\infty$ which is $UFHC_a$ for every $a\in \mathcal{A}$ but not frequently hypercyclic?
\end{problem}

%
%

\section{Weighted densities between $\underline{d}$ and $\overline{d}$}\label{Sec1}

We start by showing that for every set $I$ of non-negative integers, there exists a weight $a\in \mathcal{A}$ such that the upper density $\overline{d}(I)$ coincides with the weighted lower density $\underline{d}_a(I)$. This result will imply that 
\[\sup_{a\in \mathcal{A}}\underline{d}_{a}=\overline{d}\quad\text{ and }\quad \inf_{a\in \mathcal{A}}\overline{d}_{a}=\underline{d}.\]

\begin{theorem}\label{thm1}
For every set $I$ of non-negative integers, there exists $a\in \mathcal{A}$ such that
\[\underline{d_a}(I)= \overline{d_a}(I)=\overline{d}(I).\]
\end{theorem}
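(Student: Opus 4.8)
\noindent\emph{Proof sketch (proposal).}
The plan is to build a weight that is constant on a sequence of long consecutive blocks $J_1<J_2<\cdots$ partitioning $\mathbb{N}$, the blocks being chosen so that the density of $I$ inside each $J_k$ tends to $\overline d(I)$, and the constant values being chosen so that the weighted mass carried by each single block is asymptotically negligible compared with the mass accumulated before it. The key point I would emphasize is that one should \emph{not} try to make a single block (sitting near a ``peak'' of the density) dominate the whole weighted sum: since the weight is nonincreasing the later blocks would then become invisible and the weighted density would get stuck at $\beta_N:=\frac{|I\cap[0,N]|}{N+1}$ for that peak, a value only \emph{close} to $\overline d(I)$ and not equal to it. Instead every block is made to carry the \emph{correct} local ratio $\approx\overline d(I)$ while being individually negligible, so that the weighted density is forced towards $\overline d(I)$ and, crucially, cannot drift away from it while $N$ runs through a block.

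Write $\alpha=\overline d(I)=\limsup_N\beta_N$ and $f(N)=|I\cap[0,N]|$. First I would fix a strictly increasing sequence $0=N_0<N_1<N_2<\cdots$ with $\beta_{N_k}\to\alpha$ and growing fast, say $N_k\ge(k+1)N_{k-1}$ for $k\ge2$; thinning out a sequence realizing the $\limsup$ this is possible, and the fast growth also makes the lengths $\ell_k:=N_k-N_{k-1}$ ($=N_1+1$ for $k=1$) nondecreasing and forces $N_{k-1}/N_k\to0$. Put $J_1=[0,N_1]$ and $J_k=(N_{k-1},N_k]$ for $k\ge2$, consecutive intervals covering $\mathbb{N}$ with $\ell_k\to\infty$. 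The elementary estimate
\[\Bigl|\,\frac{|I\cap J_k|}{\ell_k}-\beta_{N_k}\,\Bigr|=\Bigl|\,\frac{f(N_k)(N_{k-1}+1)-f(N_{k-1})(N_k+1)}{(N_k-N_{k-1})(N_k+1)}\,\Bigr|\le\frac{2(N_{k-1}+1)}{N_k-N_{k-1}}\longrightarrow0\]
(for $k\ge2$; while $\frac{|I\cap J_1|}{\ell_1}=\beta_{N_1}$) then yields $\gamma_k:=\frac{|I\cap J_k|}{\ell_k}\to\alpha$.

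Next I would set $a_n:=1/\ell_k$ for $n\in J_k$. As $(\ell_k)$ is nondecreasing with $\ell_k\to\infty$, $(a_n)$ is nonincreasing and tends to $0$, and $\sum_{n\in J_k}a_n=\ell_k/\ell_k=1$, so $\sum_na_n=\infty$ and $a\in\mathcal A$ (if a \emph{strictly} decreasing weight is wanted one may add a summable strictly decreasing positive null sequence, which changes neither weighted density since $\sum_na_n=\infty$). With $A(N)=\sum_{n=0}^Na_n$ and $A_I(N)=\sum_{n\in[0,N]\cap I}a_n$ we get, for all $k\ge1$, $A(N_k)=k$ and $A_I(N_k)=\sum_{i=1}^k\gamma_i$, so $\frac{A_I(N_k)}{A(N_k)}=\frac1k\sum_{i=1}^k\gamma_i\to\alpha$ by Ces\`aro. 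For $N$ in a block $J_k$ the mass added inside $J_k$ up to $N$ is at most $\ell_k/\ell_k=1$, so $A(N)\in[A(N_{k-1}),A(N_{k-1})+1]$ and $A_I(N)\in[A_I(N_{k-1}),A_I(N_{k-1})+1]$, whence for $k\ge2$
\[\frac{A_I(N_{k-1})}{A(N_{k-1})+1}\le\frac{A_I(N)}{A(N)}\le\frac{A_I(N_{k-1})+1}{A(N_{k-1})};\]
since $A(N_{k-1})=k-1\to\infty$ and $\frac{A_I(N_{k-1})}{A(N_{k-1})}\to\alpha$, both bounds tend to $\alpha$ as $k\to\infty$, so $\lim_{N\to\infty}\frac{A_I(N)}{A(N)}=\alpha$, i.e. $\underline{d_a}(I)=\overline{d_a}(I)=\alpha=\overline d(I)$.

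I expect the only real difficulty to lie in hitting upon the right construction — the idea that every block should carry the correct ratio while being asymptotically negligible, rather than concentrating the weight somewhere — the rest being routine: the elementary estimate for $\gamma_k$, the Ces\`aro limit, and the squeeze inside a block. A secondary point one must watch is the monotonicity of $(a_n)$, which is exactly why the $N_k$ are taken to grow fast enough for the lengths $\ell_k$ to be nondecreasing; this same fast growth produces $N_{k-1}/N_k\to0$, which is what makes the local ratios $\gamma_k$ converge to $\alpha$ in the first place.
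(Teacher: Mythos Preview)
Your proof is correct and follows essentially the same construction as the paper: partition $\mathbb{N}$ into consecutive blocks on which the local density of $I$ approaches $\overline d(I)$, and take the weight constant equal to the reciprocal of the block length so that each block carries total mass~$1$, whence the weighted ratio becomes a Ces\`aro-type average of the local ratios. The only notable difference is that the paper proves just the lower bound $\underline{d_a}(I)\ge\overline d(I)$ and then invokes the general inequality $\overline{d_a}\le\overline d$ (Ernst--Mouze) for the matching upper bound, whereas your squeeze argument establishes the full limit directly without that external input.
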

\begin{proof}
Let $I$ be a set of non-negative integers such that $\overline{d}(I)=\delta$. We consider an increasing sequence $(n_k)$ with $n_0=0$ tending to infinity such that for every $k\ge 0$, $\frac{|[n_k,n_{k+1})\cap I|}{n_{k+1}-n_k}\ge (1-\frac{1}{2^k})\delta$ and such that $n_{k+1}-n_k>n_k-n_{k-1}$ for every $k\ge 1$. We then let $a_n=\frac{1}{n_{k+1}-n_{k}}$ if $n\in [n_{k},n_{k+1})$ and we remark that $a\in \mathcal{A}$. Moreover, if $n_{k}< N\le n_{k+1}$ with $k\ge 1$, we have
\begin{align*}
\frac{\sum_{j\in [0,N)\cap I}a_{j}}{\sum_{j\in [0,N)}a_j}\ge 
\frac{\sum_{j\in [0,n_{k})\cap I}a_{j}}{\sum_{j\in [0,n_{k+1})}a_j}
\ge \frac{\sum_{j=1}^{k-1}(1-\frac{1}{2^j})\delta}{k+1}=\delta\Big(\frac{k-1}{k+1}-\frac{\sum_{j=1}^{k-1}\frac{1}{2^j}}{k+1}\Big)\to \delta.
\end{align*}
We deduce that $\underline{d_a}(I)=\delta$ and thus that \[\underline{d_a}(I)= \overline{d_a}(I)=\overline{d}(I).\]
\end{proof}
\begin{cor}
For every set $I$ of non-negative integers, we have 
\[\sup_{a\in \mathcal{A}}\underline{d_a}(I)= \sup_{a\in \mathcal{A}}\overline{d_a}(I)= \overline{d}(I).\]
\end{cor}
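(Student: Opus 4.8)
The plan is to deduce the corollary directly from Theorem~\ref{thm1} together with the comparison inequalities of Ernst and Mouze recalled in the introduction; there is essentially no work left to do beyond packaging.

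First I would prove the two upper bounds. Fix an arbitrary $a\in\mathcal{A}$. By definition $a$ is a decreasing sequence tending to $0$, so the quotient sequence $(a_n/1)_n$ also decreases to $0$, and the Ernst--Mouze inequalities applied with the constant weight $b_n=1$ give $\underline{d}(I)\le\underline{d_a}(I)\le\overline{d_a}(I)\le\overline{d}(I)$ for every $I\subseteq\mathbb{N}$. In particular both $\underline{d_a}(I)\le\overline{d}(I)$ and $\overline{d_a}(I)\le\overline{d}(I)$ hold for every $a\in\mathcal{A}$, so taking the supremum over $a\in\mathcal{A}$ yields $\sup_{a\in\mathcal{A}}\underline{d_a}(I)\le\overline{d}(I)$ and $\sup_{a\in\mathcal{A}}\overline{d_a}(I)\le\overline{d}(I)$.

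For the reverse bounds I would invoke Theorem~\ref{thm1}, which produces, for the given $I$, a weight $a^\star\in\mathcal{A}$ with $\underline{d_{a^\star}}(I)=\overline{d_{a^\star}}(I)=\overline{d}(I)$. Since $a^\star\in\mathcal{A}$, it follows that $\sup_{a\in\mathcal{A}}\underline{d_a}(I)\ge\underline{d_{a^\star}}(I)=\overline{d}(I)$ and $\sup_{a\in\mathcal{A}}\overline{d_a}(I)\ge\overline{d_{a^\star}}(I)=\overline{d}(I)$. Combining these with the upper bounds established above gives $\sup_{a\in\mathcal{A}}\underline{d_a}(I)=\sup_{a\in\mathcal{A}}\overline{d_a}(I)=\overline{d}(I)$, which is the claim.

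The only step deserving a moment's attention is checking the hypothesis of the Ernst--Mouze inequalities, namely that $(a_n/b_n)_n$ decreases to $0$ for the constant choice $b_n=1$; this is immediate, since membership in $\mathcal{A}$ means precisely that $a$ decreases to $0$ (with divergent sum). Hence I expect no genuine obstacle: the corollary is purely a repackaging of the existence statement in Theorem~\ref{thm1} via the monotonicity of weighted densities along $\mathcal{A}$.
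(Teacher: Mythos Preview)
Your argument is correct and is exactly the intended one: the paper states this corollary without proof because the upper bound $\overline{d_a}(I)\le\overline{d}(I)$ for every $a\in\mathcal{A}$ is precisely the Ernst--Mouze inequality with $b_n\equiv 1$ recalled in the introduction, and the lower bound is furnished by the weight $a^\star$ of Theorem~\ref{thm1}. There is nothing to add.
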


\begin{cor}
For every set $I$ of non-negative integers, there exists $a\in \mathcal{A}$ such that
\[\underline{d_a}(I)= \overline{d_a}(I)=\underline{d}(I)\]
and thus 
\[\inf_{a\in \mathcal{A}}\underline{d_a}(I)= \inf_{a\in \mathcal{A}}\overline{d_a}(I)= \underline{d}(I).\]
\end{cor}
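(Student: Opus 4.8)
The plan is to deduce this corollary from Theorem~\ref{thm1} by passing to the complement $J=\mathbb{N}\setminus I$. The key observation is that weighted densities behave well under complementation: for any weight $a$ with $\sum_n a_n=\infty$ and any set $I$, splitting the sum $\sum_{n=0}^N a_n$ according to whether $n\in I$ or $n\notin I$ gives
\[\frac{\sum_{n\in[0,N]\cap I}a_n}{\sum_{n=0}^{N}a_n}+\frac{\sum_{n\in[0,N]\setminus I}a_n}{\sum_{n=0}^{N}a_n}=1,\]
so that, using $\limsup(1-x_N)=1-\liminf x_N$, we get $\overline{d_a}(J)=1-\underline{d_a}(I)$ and $\underline{d_a}(J)=1-\overline{d_a}(I)$. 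Taking $a_n\equiv 1$ this also yields $\overline{d}(J)=1-\underline{d}(I)$.

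First I would apply Theorem~\ref{thm1} to the set $J$: this produces a weight $a\in\mathcal{A}$ with $\underline{d_a}(J)=\overline{d_a}(J)=\overline{d}(J)$. Translating back through the identities above, the equality $\underline{d_a}(J)=\overline{d_a}(J)$ becomes $1-\overline{d_a}(I)=1-\underline{d_a}(I)$, i.e.\ $\underline{d_a}(I)=\overline{d_a}(I)$, while $\overline{d_a}(J)=\overline{d}(J)$ becomes $1-\underline{d_a}(I)=1-\underline{d}(I)$, i.e.\ $\underline{d_a}(I)=\underline{d}(I)$. Combining the two, this single $a$ satisfies $\underline{d_a}(I)=\overline{d_a}(I)=\underline{d}(I)$, which is the first assertion.

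For the displayed infimum equalities I would combine this with the Ernst--Mouze monotonicity recalled in the introduction: since for $a\in\mathcal{A}$ the sequence $(a_n/1)_n=(a_n)_n$ decreases to $0$, one has $\underline{d}(I)\le\underline{d_a}(I)\le\overline{d_a}(I)\le\overline{d}(I)$ for every $a\in\mathcal{A}$. Hence $\inf_{a\in\mathcal{A}}\underline{d_a}(I)\ge\underline{d}(I)$ and $\inf_{a\in\mathcal{A}}\overline{d_a}(I)\ge\underline{d}(I)$, and the weight $a$ constructed above shows both infima are $\le\underline{d}(I)$, giving equality. Since the whole argument is just an application of the already-proved Theorem~\ref{thm1} together with a change of variables, there is no substantial obstacle; the only point demanding a little care is verifying the complementation identities for $\limsup$ and $\liminf$, which is routine.
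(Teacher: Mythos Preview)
Your proof is correct and follows exactly the same approach as the paper: apply Theorem~\ref{thm1} to the complement $\mathbb{N}\setminus I$ and use the identity $\underline{d_a}(\mathbb{N}\setminus I)=1-\overline{d_a}(I)$. You simply spell out in more detail what the paper compresses into one line.
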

\begin{proof}
Let $I$ be a set of non-negative integers and $a\in \mathcal{A}$. Since we have $\underline{d_a}(\mathbb{N}\backslash I)=1-\overline{d_a}(I)$, the result follows from Theorem~\ref{thm1}.
\end{proof}

This last corollary allows to express the set of frequently hypercyclic vectors in terms of $\mathcal{U}$-frequently hypercyclic vectors with respect to weights in $\mathcal{A}$.

\begin{theorem}\label{FHC1}
Let $X$ be a separable infinite-dimensional Fréchet space and $T$ a continuous linear operator on $X$. Then
\[FHC(T)=\bigcap_{a\in \mathcal{A}}FHC_a(T)=\bigcap_{a\in \mathcal{A}}UFHC_a(T).\]
\end{theorem}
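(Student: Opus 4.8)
The goal is to prove the chain of equalities
\[
FHC(T)=\bigcap_{a\in \mathcal{A}}FHC_a(T)=\bigcap_{a\in \mathcal{A}}UFHC_a(T).
\]
The overall strategy is to establish the two inclusions that sandwich everything: on one side, a frequently hypercyclic vector is automatically $UFHC_a$ for every weight (indeed $FHC_a$ for every weight); on the other side, a vector that is $UFHC_a$ for \emph{all} weights in $\mathcal{A}$ must already be frequently hypercyclic. Since $FHC_a(T)\subseteq UFHC_a(T)$ for each fixed $a$, we automatically get
\[
\bigcap_{a\in \mathcal{A}}FHC_a(T)\subseteq \bigcap_{a\in \mathcal{A}}UFHC_a(T),
\]
so it suffices to prove $FHC(T)\subseteq \bigcap_{a\in \mathcal{A}}FHC_a(T)$ and $\bigcap_{a\in \mathcal{A}}UFHC_a(T)\subseteq FHC(T)$.

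\textbf{Step 1: $FHC(T)\subseteq \bigcap_{a\in\mathcal{A}}FHC_a(T)$.} Let $x\in FHC(T)$, so $\underline{d}(N(x,U))>0$ for every non-empty open set $U$. Fix $a\in\mathcal{A}$. By definition of $\mathcal A$, the sequence $a=(a_n)$ decreases to $0$ with divergent sum; comparing $a$ with the constant weight $b_n\equiv 1$, the ratio $(a_n/b_n)=a_n$ decreases to $0$, so the Ernst--Mouze inequality quoted in the introduction gives $\underline{d}(N(x,U))\le \underline{d_a}(N(x,U))$. Hence $\underline{d_a}(N(x,U))>0$ for every non-empty open $U$, i.e. $x\in FHC_a(T)$. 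As $a$ was arbitrary, $x\in\bigcap_{a\in\mathcal{A}}FHC_a(T)$.

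\textbf{Step 2: $\bigcap_{a\in\mathcal{A}}UFHC_a(T)\subseteq FHC(T)$.} Suppose $x\notin FHC(T)$; we produce a single weight $a\in\mathcal{A}$ with $x\notin UFHC_a(T)$. Since $x\notin FHC(T)$, there is a non-empty open set $U$ with $\underline{d}(N(x,U))=0$. Apply the Corollary following Theorem~\ref{thm1} to the set $I:=N(x,U)$: there exists $a\in\mathcal{A}$ such that $\overline{d_a}(I)=\underline{d}(I)=0$. Then $\overline{d_a}(N(x,U))=0$, so $x\notin UFHC_a(T)$, and in particular $x\notin\bigcap_{a\in\mathcal{A}}UFHC_a(T)$. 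Contrapositively, $\bigcap_{a\in\mathcal{A}}UFHC_a(T)\subseteq FHC(T)$. Combining Steps 1 and 2 with the trivial inclusions $FHC_a(T)\subseteq UFHC_a(T)$ closes the loop and proves all three sets equal.

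\textbf{Expected difficulty.} There is essentially no obstacle here: the entire content has been front-loaded into Theorem~\ref{thm1} and its corollary, which give, for any prescribed set $I$, a weight realizing $\underline d(I)$ as a weighted upper density. The only points requiring minor care are (i) correctly invoking the Ernst--Mouze comparison in the direction needed in Step~1 (constant weight versus a weight in $\mathcal A$), and (ii) noting that $UFHC_a$ being violated requires only a single bad open set $U$, so exhibiting one weight $a$ suffices to leave the intersection. Neither is a genuine obstruction.
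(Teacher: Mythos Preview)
Your proof is correct and follows essentially the same approach as the paper: the trivial chain $FHC(T)\subset\bigcap_{a}FHC_a(T)\subset\bigcap_{a}UFHC_a(T)$ via the Ernst--Mouze comparison, and then the corollary to Theorem~\ref{thm1} to close the loop by producing, for any set $I$ with $\underline d(I)=0$, a weight $a\in\mathcal A$ with $\overline{d_a}(I)=0$. The only cosmetic difference is that you phrase Step~2 as a contrapositive while the paper argues directly using a countable open basis.
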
 
\begin{proof}
We know that 
\[FHC(T)\subset\bigcap_{a\in \mathcal{A}}FHC_a(T)\subset\bigcap_{a\in \mathcal{A}}UFHC_a(T).\]
On the other hand, if $x\in \bigcap_{a\in \mathcal{A}}UFHC_a(T)$ and $(U_n)$ is an open basis of $X$ then we have $\overline{d_a}(N(x,U_n))>0$ for every $n$ and every $a\in \mathcal{A}$. Therefore, since for every $n$ there exists $a\in \mathcal{A}$ such that $\underline{d}(N(x,U_n))=\overline{d_a}(N(x,U_n))$, we deduce that $\underline{d}(N(x,U_n))>0$ for every $n$.
\end{proof}

We can obtain a similar result for the $\mathcal{U}$-frequent hypercyclicity. However, we first need to adapt Theorem~\ref{thm1}.

\begin{theorem}\label{thm seq}
Let $(I_n)_{n\ge 1}$ be a sequence of sets of positive upper density. There exists $a\in \mathcal{A}$ such that $\underline{d_a}(I_n)>0$ for every $n\ge 1$.
\end{theorem}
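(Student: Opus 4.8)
The plan is to iterate the construction from Theorem~\ref{thm1} across the countably many sets $I_n$ simultaneously, by building a single partition $(n_k)$ of $\mathbb{N}$ into consecutive blocks whose lengths grow, and on which the weight $a$ is constant (equal to the reciprocal of the block length), exactly as before; the subtlety is that each $I_n$ must be "served" infinitely often so that its weighted lower density stays bounded below.

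First I would fix, for each $n\ge 1$, a positive number $\delta_n \le \overline{d}(I_n)$ (for instance $\delta_n=\min(\overline{d}(I_n),1)$, though only positivity matters) and choose a surjection $\sigma\colon \{k\ge 1\}\to \{n\ge 1\}$ hitting every value infinitely often. Then I would construct the increasing sequence $(n_k)$ with $n_0=0$ recursively: having chosen $n_0<\dots<n_k$, use the definition of $\overline{d}(I_{\sigma(k)})\ge\delta_{\sigma(k)}$ to pick $n_{k+1}$ so large that
\[
\frac{|[0,n_{k+1})\cap I_{\sigma(k)}|}{n_{k+1}}\ge \Big(1-\tfrac{1}{2^k}\Big)\delta_{\sigma(k)}
\quad\text{and}\quad n_{k+1}-n_k > k\,n_k .
\]
(The second inequality forces the block lengths $\ell_k:=n_{k+1}-n_k$ to dominate $n_k=\sum_{j<k}\ell_j$, which is what makes the tail block negligible and the earlier blocks collectively negligible too.) As in Theorem~\ref{thm1}, set $a_j=1/\ell_k=1/(n_{k+1}-n_k)$ for $j\in[n_k,n_{k+1})$; this is decreasing to $0$ with divergent sum, so $a\in\mathcal{A}$.

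Now I would estimate $\underline{d_a}(I_n)$ for a fixed $n$. For $N$ with $n_k<N\le n_{k+1}$, the denominator $\sum_{j<N}a_j$ lies in $[k-1,k+1]$ roughly (each complete block contributes exactly $1$). For the numerator, I discard all blocks except one: if $k'\le k-1$ is the largest index with $\sigma(k')=n$, then
\[
\sum_{j\in[0,N)\cap I_n}a_j \ \ge\ \frac{|[0,n_{k'+1})\cap I_{\sigma(k')}|}{n_{k'+1}-n_{k'}}
\ \ge\ \frac{|[0,n_{k'+1})\cap I_n|}{n_{k'+1}} ,
\]
using $n_{k'+1}-n_{k'}\le n_{k'+1}$, and the latter is at least $(1-2^{-k'})\delta_n$. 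Since $\sigma$ hits $n$ infinitely often, for every large $N$ such a $k'$ exists and, crucially, we may take the \emph{most recent} one, so $k'\to\infty$ as $N\to\infty$; hence the numerator is eventually at least $\tfrac12\delta_n$, say. Dividing by the denominator $\le k+1$ would unfortunately go to $0$, so here is the one real obstacle: a single good block does not suffice against a denominator growing like $k$.

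The fix — and the main point where the single-sequence argument must genuinely differ from Theorem~\ref{thm1} — is to make the blocks assigned to index $n$ carry a \emph{fixed positive proportion} of the total weight rather than weight $1$ each. Concretely, I would instead let the weight be constant on longer stretches: group the indices $k$ into super-blocks, and when it is $n$'s turn, devote a whole super-block of $c_n\cdot(\text{current length})$ consecutive blocks to $n$ (with $\sum_n$-style bookkeeping ensuring each $n$ recurs), choosing the constant weight on stretch $k$ to be $1/\ell_k$ where now $\ell_k$ is chosen so that the stretches devoted to $n$ up to stage $N$ sum to at least a fixed fraction $\beta_n>0$ of $\sum_{j<N}a_j$. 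Then, combining with the density estimate above, $\underline{d_a}(I_n)\ge \beta_n\,\delta_n/2>0$. The bookkeeping — interleaving the demands of all $n$ so that (i) $a$ stays decreasing, (ii) $\sum a_j=\infty$, and (iii) each $n$ keeps a positive asymptotic share of the weight — is routine once one allocates, say, share $\asymp 2^{-n}$ of each successive "round" to index $n$ and makes rounds long enough (geometrically growing) that the density-approximation error $2^{-k}$ has died off; I would carry this out by an explicit recursion on $k$ as above, and the verification of (i)–(iii) and of the final lower bound is then a direct computation of the same flavour as in Theorem~\ref{thm1}.
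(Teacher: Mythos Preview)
Your diagnosis is exactly right: the na\"ive construction with an arbitrary surjection $\sigma$ fails because one good block among $k$ contributes only $O(1/k)$ to the weighted ratio, and the cure is to make the blocks serving $I_n$ occupy a fixed positive proportion of \emph{all} blocks. Your sketched fix via geometric ``rounds'' allocating share $\asymp 2^{-n}$ to index $n$ is viable, but it remains a sketch: you have not actually specified how to interleave the rounds so that block lengths stay increasing while every $n$ eventually enters the rotation, nor verified that the resulting $\beta_n$ is uniformly positive in $N$ rather than only asymptotically.

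The paper accomplishes the same goal with a single clean device that replaces all of your round/super-block bookkeeping: it fixes once and for all a partition $(A_n)_{n\ge 1}$ of $\mathbb{N}$ into infinite sets each having \emph{bounded gaps}, and declares that block $l$ serves $I_n$ iff $l\in A_n$. Bounded gaps give, for the enumeration $(l_j)$ of $A_n$, a constant $R_n$ with $l_j\le jR_n$; hence among the first $L$ blocks at least $L/R_n - O(1)$ serve $I_n$, and the computation in the style of Theorem~\ref{thm1} yields $\underline{d_a}(I_n)\ge \delta_n/R_n>0$ directly. This is exactly your ``positive asymptotic share $\beta_n$'' (here $\beta_n=1/R_n$), obtained without rounds or geometric growth conditions; the only recursion left is the choice of $n_{l+1}$ large enough that $\frac{|[n_l,n_{l+1})\cap I_n|}{n_{l+1}-n_l}\ge (1-2^{-l})\delta_n$ for the relevant $n$. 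Your approach would work once written out, but the bounded-gap partition short-circuits the combinatorics.
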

\begin{proof}
Let $(I_n)_{n\ge 1}$ be a sequence of sets of positive upper density and $\delta_n= \overline{d}(I_n)$. We consider $(A_n)_{n\ge 1}$ a partition of $\mathbb{N}$ such that each set $A_n$ is infinite and has bounded gaps. We let $n_0=0$ and we select an increasing sequence $(n_l)_{l\ge 1}$ such that $(n_{l+1}-n_l)_{l\ge 1}$ is an increasing sequence tending to infinity and such that for every $n \ge 1$, every $l \in A_n$,
\[\frac{|[n_{l},n_{l+1})\cap I_n|}{n_{l+1}-n_{l}}\ge \left(1-\frac{1}{2^l}\right)\delta_n.\] We then let $a_n=\frac{1}{n_{l+1}-n_{l}}$ if $n\in [n_{l},n_{l+1})$ and we remark that $a\in \mathcal{A}$. Moreover, for every $n\ge 1$, if $(l_j)_{j\ge 1}$ is the increasing enumeration of $A_n$ and if $n_{l_j}< N\le n_{l_{j+1}}$, we have 
\begin{align*}
\frac{\sum_{k\in [0,N)\cap I_{n}}a_{k}}{\sum_{k\in [0,N)}a_k}&\ge 
\frac{\sum_{k\in [0,n_{l_j})\cap I_{n}}a_{k}}{\sum_{k\in [0,n_{l_{j+1}})}a_k}\\
&\ge \frac{\sum_{j'=1}^{j-1}\sum_{k\in [n_{l_{j'}},n_{l_{j'}+1})\cap I_{n}}a_{k}}{l_{j+1}}\\
&\ge \frac{\sum_{j'=1}^{j-1}\left(1-\frac{1}{2^{l_{j'}}}\right)\delta_n}{l_{j+1}}\\
&\ge \frac{(j-3)\delta_n}{l_{j+1}}.
\end{align*}
Finally, since $A_n$ is a set with bounded gaps, there exists $R_n$ such that for every $j\ge 1$, $l_j\le jR_n$ and we conclude that 
\[\underline{d_a}(I_n)=\liminf_N \frac{\sum_{k\in [0,N)\cap I_{n}}a_{k}}{\sum_{k\in [0,N)}a_k}\ge \frac{\delta_n}{R_n}>0.\]
\end{proof}

\begin{cor}
Let $X$ be a separable infinite-dimensional Fréchet space and $T$ a continuous linear operator on $X$. Then
\[UFHC(T)=\bigcup_{a\in \mathcal{A}}UFHC_a(T)=\bigcup_{a\in \mathcal{A}}FHC_a(T).\] 
\end{cor}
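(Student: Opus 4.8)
The plan is to prove the two equalities by a short sandwich argument, reusing Theorem~\ref{thm seq} for $\mathcal{U}$-frequent hypercyclicity in exactly the way Theorem~\ref{thm1} was reused for Theorem~\ref{FHC1}.

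First I would record the trivial inclusions. By the Ernst--Mouze chain $FHC\Rightarrow FHC_a\Rightarrow UFHC_a\Rightarrow UFHC$ recalled in the introduction, a vector witnessing one of these properties witnesses all the weaker ones for the \emph{same} vector; indeed, if $\underline{d_a}(N(x,U))>0$ then $\overline{d_a}(N(x,U))\ge\underline{d_a}(N(x,U))>0$, and $\overline{d}(N(x,U))\ge\overline{d_a}(N(x,U))$ since $(a_n/1)$ decreases to $0$. Hence, vectorwise, $FHC_a(T)\subseteq UFHC_a(T)\subseteq UFHC(T)$ for every $a\in\mathcal{A}$, and taking unions over $a\in\mathcal{A}$ gives
\[\bigcup_{a\in\mathcal{A}}FHC_a(T)\subseteq\bigcup_{a\in\mathcal{A}}UFHC_a(T)\subseteq UFHC(T).\]

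It then remains to show $UFHC(T)\subseteq\bigcup_{a\in\mathcal{A}}FHC_a(T)$. Let $x\in UFHC(T)$ and fix a countable basis $(U_n)_{n\ge1}$ of non-empty open subsets of $X$, which exists because $X$ is separable and metrizable. By definition of $\mathcal{U}$-frequent hypercyclicity, each $I_n:=N(x,U_n)$ has $\overline{d}(I_n)>0$, so $(I_n)_{n\ge1}$ is a sequence of sets of positive upper density. Applying Theorem~\ref{thm seq} produces a \emph{single} weight $a\in\mathcal{A}$ with $\underline{d_a}(I_n)>0$ for every $n\ge1$. Now, given an arbitrary non-empty open set $U\subseteq X$, choose $n$ with $U_n\subseteq U$; then $N(x,U)\supseteq N(x,U_n)=I_n$, and by monotonicity of $\underline{d_a}$ with respect to inclusion we get $\underline{d_a}(N(x,U))\ge\underline{d_a}(I_n)>0$. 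Thus $x\in FHC_a(T)$, and the corollary follows.

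I do not expect any genuine obstacle in this argument: the whole difficulty has already been front-loaded into Theorem~\ref{thm seq}, whose hypothesis — countably many sets of positive upper density — matches exactly the data supplied by a countable open basis together with $\mathcal{U}$-frequent hypercyclicity. The only points needing (minor) care are the existence of a countable open basis (a consequence of separability plus metrizability of a Fréchet space) and the elementary monotonicity of the weighted lower density; everything else is bookkeeping with the implication chain $FHC\Rightarrow FHC_a\Rightarrow UFHC_a\Rightarrow UFHC$.
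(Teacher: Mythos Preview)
Your proposal is correct and follows essentially the same approach as the paper: both establish the trivial inclusions $FHC_a(T)\subseteq UFHC_a(T)\subseteq UFHC(T)$ and then, for $x\in UFHC(T)$, apply Theorem~\ref{thm seq} to the sequence $\bigl(N(x,U_n)\bigr)_{n\ge 1}$ for a countable open basis $(U_n)$ to obtain a single weight $a\in\mathcal{A}$ witnessing $x\in FHC_a(T)$. Your write-up just makes explicit the monotonicity step passing from basis elements to arbitrary open sets, which the paper leaves implicit in the phrase ``since $(U_n)$ is an open basis.''
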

\begin{proof}
Let $(U_n)_{n\ge 1}$ be an open basis of $X$. If $T$ is not $\mathcal{U}$-frequently hypercyclic then the result is obvious since $FHC_a(T)\subset UFHC_a(T)\subset UFHC(T)$.
On the other hand, if $T$ is $\mathcal{U}$-frequently hypercyclic and $x$ is a $\mathcal{U}$-frequently hypercyclic vector for $T$, then we have $\overline{d}(N(x,U_n))>0$ for every $n\ge 1$ and it follows from Theorem~\ref{thm seq} that there exists $a\in \mathcal{A}$ such that $\underline{d_a}(N(x,U_n))> 0$ for every $n\ge 1$. Since $(U_n)$ is an open basis, this implies that $x$ is frequently hyperyclic with respect to $a$ and thus
\[UFHC(T)\subset \bigcup_{a\in \mathcal{A}}FHC_a(T)\subset \bigcup_{a\in \mathcal{A}}UFHC_a(T)\subset UFHC(T).\]
\end{proof}

Since we have already mentionned, the set $UFHC(T)$ of $\mathcal{U}$-frequently hypercyclic vectors is either empty or residual~\cite{BayartR, GrivM, Moot} and the set $FHC(T)$ of frequently hypercyclic vectors is always meager~\cite{BayartR}. These results can be generalized to weighted versions of $\mathcal{U}$-frequent hypercyclicity and of frequent hypercyclicity.

\begin{theorem}
Let $X$ be a separable infinite-dimensional Fréchet space and $T$ a continuous linear operator on $X$.
For every $a\in \mathcal{A}$, $UFHC_a(T)$ is either empty or residual and $FHC_a(T)$ is a meager set.
\end{theorem}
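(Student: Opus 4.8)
The plan is to mimic the classical dichotomy/meagerness arguments for $UFHC$ and $FHC$ (as in \cite{BayartR, GrivM, Moot}), replacing the unweighted densities by $\overline{d_a}$ and $\underline{d_a}$ and checking that each step survives the weighting, using only the fact that $a$ is decreasing to $0$ with divergent sum.

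First, for the residuality part, fix an open basis $(U_m)_{m\ge 1}$ of $X$. One shows $UFHC_a(T)$ is a $G_\delta$ set by writing, for each $m$ and each rational $\varepsilon>0$,
\[\{x : \overline{d_a}(N(x,U_m))>\varepsilon\}=\bigcap_{N_0}\bigcup_{N\ge N_0}\Big\{x : \tfrac{\sum_{n\in[0,N]\cap N(x,U_m)}a_n}{\sum_{n=0}^{N}a_n}>\varepsilon\Big\},\]
where each inner set is open because $N(x,U_m)$ only depends on finitely many iterates $T^nx$, $n\le N$, and $x\mapsto T^nx$ is continuous; intersecting over $m$ and taking a union over $\varepsilon\in\mathbb Q_{>0}$ keeps us in the $G_\delta$ class, and $UFHC_a(T)=\bigcap_m\bigcup_{\varepsilon\in\mathbb Q_{>0}}\{x:\overline{d_a}(N(x,U_m))>\varepsilon\}$. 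Then one invokes the standard fact that $UFHC_a(T)$ is translation-like invariant under the (dense) action generated by $T$ together with the span of a chosen dense orbit, or more directly: if $x_0\in UFHC_a(T)$, then the set $\{x:\overline{d_a}(N(x,U))>0 \text{ for all open }U\}$ contains $x_0+D$ for a dense set $D$ (using that $\overline{d_a}$ is not decreased by removing a set of $\overline{d_a}$-density $0$, which holds because a finite modification does not change $\overline{d_a}$, and using hypercyclicity of $T$ on a comeager set). Hence a nonempty $G_\delta$ that is ``dense-orbit invariant'' is residual. The cleanest route is to reuse verbatim the abstract lemma of \cite{GrivM, Moot}: a $G_\delta$ set which is invariant under a topologically transitive semigroup action and nonempty is residual; $UFHC_a(T)$ is $G_\delta$, and $T$-invariance is immediate since $N(Tx,U)=N(x,U)-1$ and $\overline{d_a}$ is insensitive to shifting by one index (again because $a_n\to 0$ and $\sum a_n=\infty$ force $a_N/\sum_{n\le N}a_n\to 0$).

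Second, for the meagerness of $FHC_a(T)$: since $a$ is decreasing with $\sum a_n=\infty$, Theorem~\ref{thm1} (or rather its proof, applied to a single set) is not needed here; instead one adapts the argument of \cite{BayartR}. Pick a nonzero continuous linear functional $x^*$ and a vector $u$ with $x^*(u)\ne 0$; by hypercyclicity the set where $\limsup_n\|T^nx\|=\infty$ is comeager, so $x^*(T^nx)$ is unbounded for a comeager set of $x$. For such $x$, given any $c>0$, the set $\{n: |x^*(T^nx)|>c\}$ has $\overline{d}$-density... the key point is that for a frequently hypercyclic $x$ one needs $N(x,U)$ to have positive $\underline{d_a}$-density for the two disjoint open sets $U_+=\{y:x^*(y)>c\}$ and $U_-=\{y:x^*(y)<-c\}$ simultaneously, while the growth of $\|T^nx\|$ forces long runs on which $x^*(T^nx)$ stays large of one sign, making $\underline{d_a}(N(x,U_+))=0$ or $\underline{d_a}(N(x,U_-))=0$. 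Concretely: $FHC_a(T)\subset\bigcup_{c\in\mathbb N}\bigcup_{\delta\in\mathbb Q_{>0}}\big(A_{c,\delta}^+\cap A_{c,\delta}^-\big)$ where $A_{c,\delta}^{\pm}=\{x:\underline{d_a}(N(x,U_\pm))\ge\delta\}$, and one shows each $A_{c,\delta}^+\cap A_{c,\delta}^-$ is nowhere dense, using that $\underline{d_a}\le\overline{d_a}$ and $\underline{d_a}(N(x,U_+))+\overline{d_a}(N(x,U_-))\le 1$ is too crude, so instead one exploits that on any interval $[n_k,n_{k+1})$ along which $x^*(T^nx)$ keeps the same large sign — such intervals exist with $(n_{k+1}-n_k)\to\infty$ because $T$-orbits of nonzero vectors cannot return arbitrarily fast from large norm — the whole interval lies in $N(x,U_+)$ or in $N(x,U_-)$, and since $a$ is decreasing the weighted mass of such an interval dominates a fixed proportion, forcing $\underline{d_a}$ of the complementary set to $0$.

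The main obstacle I expect is the meagerness half: the classical proof in \cite{BayartR} is tailored to $\underline d$ and one must verify that the weighting by a decreasing $a$ does not allow $N(x,U_+)$ and $N(x,U_-)$ to simultaneously acquire positive $\underline{d_a}$-density. The decisive structural input is that $a$ is \emph{decreasing}: on a long block $[n_k,n_{k+1})$ contained (say) in $N(x,U_+)$, we have $\sum_{n\in[n_k,n_{k+1})}a_n\ge (n_{k+1}-n_k)a_{n_{k+1}}$, and comparing with $\sum_{n\le n_{k+1}}a_n$ one checks, exactly as in the proof of Theorem~\ref{thm1}, that a positive fraction of the total weighted mass up to $n_{k+1}$ sits on blocks of the ``wrong'' sign, so $\underline{d_a}(N(x,U_-))$ cannot stay bounded below. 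Packaging this into a Baire-category statement — approximating an arbitrary $x$ in $A_{c,\delta}^+\cap A_{c,\delta}^-$ by a nearby vector whose orbit develops such a long one-signed block — is the step requiring care, but it follows the template of \cite{BayartR} once the weighted estimate above is in hand.
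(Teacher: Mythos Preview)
Your residuality half is salvageable but imprecise. The set $UFHC_a(T)$ is \emph{not} shown to be $G_\delta$ by your computation: $\{x:\overline{d_a}(N(x,U_m))>0\}=\bigcup_{\varepsilon\in\mathbb Q_{>0}}\{x:\overline{d_a}(N(x,U_m))>\varepsilon\}$ is a countable union of $G_\delta$'s, hence only $G_{\delta\sigma}$. What actually works is: for each $m$, pick $\varepsilon_m<\overline{d_a}(N(x_0,U_m))$ where $x_0$ is a fixed $UFHC_a$ vector; then $\{x:\overline{d_a}(N(x,U_m))>\varepsilon_m\}$ is a $G_\delta$ containing the dense orbit $\{T^n x_0\}$ (because $a$ decreasing gives $\overline{d_a}((I-1)\cap\mathbb N)\ge\overline{d_a}(I)$, the only direction you need here), so it is comeager, and intersecting over $m$ gives the result. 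The paper bypasses all of this by observing that the family of sets with $\overline{d_a}>0$ is an upper Furstenberg family and invoking \cite[Theorem~15]{Bon}. Also, your justification ``$a_N/\sum_{n\le N}a_n\to 0$'' only shows that deleting a single term is harmless; it does \emph{not} show shift-invariance of $\overline{d_a}$ or $\underline{d_a}$.

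The meagerness half has a genuine gap. Your key mechanism --- that growth of $\|T^nx\|$ forces long intervals on which $x^*(T^nx)$ keeps a fixed sign, ``because $T$-orbits of nonzero vectors cannot return arbitrarily fast from large norm'' --- is simply false for general operators: think of $2R$ with $R$ a rotation, where $x^*(T^nx)$ changes sign at bounded gaps while $\|T^nx\|\to\infty$. Nothing in your outline produces the long one-signed blocks you need, so the Baire packaging you allude to never gets off the ground. The paper takes a completely different route: it reduces to Moothathu's argument \cite[Theorem~1]{Moot}, for which the only ingredient to check is the shift-invariance $\underline{d_a}(I)=\underline{d_a}(I+1)$. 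The inequality $\underline{d_a}(I)\ge\underline{d_a}(I+1)$ is immediate from monotonicity of $a$, but the reverse inequality is delicate precisely because $a_{j+1}/a_j$ can be arbitrarily small infinitely often. The paper's trick is: fix $\alpha<1$, let $(n_k)$ enumerate the indices with $a_{n_k+1}/a_{n_k}\le\alpha$, observe that $a_{n_k}\le a_0\alpha^{k-1}$ so these indices carry zero $a$-weighted density, and on the complement $a_{j+1}\ge\alpha a_j$; this yields $\underline{d_a}(I+1)\ge\alpha\,\underline{d_a}(I)$ for every $\alpha<1$, hence equality. That lemma is the heart of the proof, and your proposal neither proves it nor replaces it with a valid alternative.
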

\begin{proof}
The result for $UFHC_a(T)$ follows from the fact that the family of sets with positive density $\overline{d_{a}}$ is an upper Furstenberg family \cite[Example 12(d) and Theorem~15]{Bon}.
On the other hand, for the set $FHC_a(T)$, if we look at the proof given by Moothathu~\cite[Theorem 1]{Moot} in the case of frequent hypercyclicity, it is sufficient to show that $\underline{d}_a(I)=\underline{d}_a(I+1)$ for every set $I$ of non-negative integers. 

We first remark that $\underline{d}_a(I)\ge \underline{d}_a(I+1)$. Indeed, since $a$ is decreasing, we have for every $N$,
\[\frac{\sum_{k\in [0,N]\cap I} a_k}{\sum_{k\in [0,N]}a_k}\ge \frac{\sum_{k\in [0,N]\cap (I+1)} a_k}{\sum_{k\in [0,N]}a_k}.\]
On the other hand, if we denote by $(n_k)_{k\ge 1}$ the increasing enumeration of indices such that $\frac{a_{n_k+1}}{a_{n_k}}\le \alpha$ where $\alpha$ is a fixed positive real number strictly smaller than~$1$. We can then deduce that $a_{n_k}\le a_{0}\alpha^{k-1}$ since $a$ is decreasing and thus
\[\frac{\sum_{j\in [0,N]\cap (n_k)}a_{j}}{\sum_{j=0}^{N}a_j}\le \frac{\sum_{k: n_k\le N}a_{0}\alpha^{k-1}}{\sum_{j=0}^{N}a_j}\xrightarrow[N\to \infty]{} 0.\]
Therefore, for every $N\ge 0$, we have 
\begin{align*}
\frac{\sum_{j\in [0,N]\cap (I+1)}a_j}{\sum_{j=0}^{N}a_j}&= \frac{\sum_{j\in [0,N)\cap I}a_{j+1}}{\sum_{j=0}^{N}a_j}\\
&\ge \frac{\sum_{j\in ([0,N)\cap I)\backslash (n_k)}\frac{a_{j+1}}{a_j} a_j}{\sum_{j=0}^{N}a_j}\\
&\ge  \frac{\sum_{j\in ([0,N)\cap I)\backslash (n_k)}\alpha a_j}{\sum_{j=0}^{N}a_j}\\
&\ge\alpha\frac{\sum_{j\in [0,N)\cap I} a_j}{\sum_{j=0}^{N}a_j}-\alpha\frac{\sum_{j\in [0,N]\cap (n_k)}a_{j}}{\sum_{j=0}^{N}a_j}.
\end{align*}
and since $\frac{\sum_{j\in [0,N]\cap (n_k)}a_{j}}{\sum_{j=0}^{N}a_j}\to 0$, we get $\underline{d}_a(I+1)\ge \alpha\underline{d}_a(I)$. Finally, this inequality is satisfied for every $\alpha<1$ and thus $\underline{d}_a(I)=\underline{d}_a(I+1)$.
\end{proof}
\begin{remark}
It follows from this theorem that it is not possible to find a countable set $\mathcal{B}\subset \mathcal{A}$ such that $FHC(T)=\bigcap_{b\in\mathcal{B}}UFHC_b(T)$.
\end{remark}

\section{Difference between UFHC$_a$ and FHC}\label{Sec2}

In view of Theorem~\ref{FHC1}, we can wonder if an operator $UFHC_a$ for every $a\in \mathcal{A}$ is also frequently hypercyclic. Unfortunately, this is not the case. Indeed, we will show in this section how we can construct an operator which is UFHC$_a$ for every $a \in \mathcal{A}$ and not frequently hypercyclic. However, we will only be able to exhibit such an operator on $\ell_1(\N)$, while examples of operators which are $\mathcal{U}$-frequently hypercyclic and not frequently hypercyclic were given by Bayart and Ruzsa on $c_0(\N)$~\cite{BayartR} and by Grivaux, Matheron and Menet on $\ell_p(\mathbb{N})$~\cite{Monster} for every $1\le p<\infty$. 

In order to identify operators which are $UFHC_a$ for every $a\in \mathcal{A}$, we first need a criterion for $\mathcal{U}$-frequent hypercyclicity with respect to a weight $a$. To this end, we adapt the condition based on periodic points for $\mathcal{U}$-frequent hypercyclicity given in \cite[Theorem 5.14]{Monster}.
	
\begin{theorem}\label{UFHCa}
Let $a\in \mathcal{A}$ and $X_0$ a dense subspace of $X$ with $T(X_0)\subset X_0$ and $X_0\subset \text{Per}(T)$. There exists a non-decreasing sequence of positive integers $(\alpha_n)_{n\ge 1}$ such that if for every $x\in X_0$ and every $\varepsilon>0$, there exist $z\in X_0$ and $n\ge 1$ such that 
\begin{enumerate}
\item $\|z\|<\varepsilon$
\item $\|T^{n+k}z-T^kx\|<\varepsilon$ for every $0\le k\le \alpha_n n$.
\end{enumerate}
then $T$ is $UFHC_a$.
\end{theorem}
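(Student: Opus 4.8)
The plan is to fix the sequence $(\alpha_n)$ using only the weight $a$, and then to build a single $UFHC_a$ vector by a block construction that combines the hypothesis with the periodicity of the points of $X_0$.

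\textbf{Choice of $(\alpha_n)$ and the density estimate.} Write $S_N=\sum_{k=0}^{N}a_k$; since $a\in\mathcal{A}$ we have $S_N\to\infty$, so for each $n$ I can pick $\alpha_n$ (and then replace it by $\max_{m\le n}\alpha_m$ to make $(\alpha_n)$ non-decreasing) so large that $S_{n(1+\alpha_n)}\ge 2S_n$. This is the only use of the shape of condition~(2): suppose that over a window $[n,n(1+\alpha_n)]$ the orbit of a vector visits a fixed neighbourhood along the arithmetic progression $n,n+p,n+2p,\dots$ of step $p$, where $p$ will be the period of a periodic target. Since $a$ is decreasing, grouping consecutive blocks of $p$ terms gives, with $N=n(1+\alpha_n)$,
\[ \sum_{j\ge 0,\;n+jp\le N}a_{n+jp}\;\ge\;\frac{1}{p}\sum_{m=n}^{N}a_m\;=\;\frac{1}{p}\bigl(S_N-S_{n-1}\bigr)\;\ge\;\frac{1}{2p}\,S_N . \]
Read at the scale $N$, the visit set thus carries $a$-mass at least $\tfrac{1}{2p}S_N$, hence contributes at least $\tfrac{1}{2p}$ to $\overline{d_a}$.

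\textbf{Construction of the vector.} I would enumerate a countable dense subset of $X_0$ with each point repeated infinitely often and build $y=\sum_j z_j$ inductively. At step $j$ the partial sum $y_{j-1}=z_1+\dots+z_{j-1}$ lies in $X_0$ (a $T$-invariant subspace) and is therefore periodic, say of period $P_{j-1}$. To steer the orbit of $y$ near the current target $u_j\in X_0$, I apply the hypothesis to the recentred point $x_j=u_j-y_{j-1}\in X_0$ with a small precision $\varepsilon_j$, obtaining $z_j\in X_0$ and $n_j$ with $\|z_j\|<\varepsilon_j$ and $\|T^{n_j+k}z_j-T^kx_j\|<\varepsilon_j$ for $0\le k\le\alpha_{n_j}n_j$. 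For $k$ a multiple of the period of $u_j$ this yields $T^{n_j+k}y_j\approx u_j+(T^{n_j}-I)T^ky_{j-1}$, so the visit to a neighbourhood of $u_j$ is clean precisely when the periodic history $(T^{n_j}-I)y_{j-1}$ is negligible; I arrange this through the compatibility of $n_j$ with $P_{j-1}$, carrying any bounded residual into the target of a later block, which is possible because every target recurs infinitely often. The future tails $\sum_{i>j}T^m z_i$ on the window $[n_j,n_j(1+\alpha_{n_j})]$ are killed by choosing each $\varepsilon_i$ ($i>j$) small relative to the continuity bounds of the finitely many maps $T^m$ with $m\le n_{i-1}(1+\alpha_{n_{i-1}})$, so that $\|T^m z_i\|$ stays summably small there and the series converges.

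\textbf{Conclusion and main obstacle.} Fixing a basic open set $U$ and a target $u$ in it, the blocks whose target equals $u$ share a common period $p$; at the scales $N_j=n_j(1+\alpha_{n_j})$ attached to these blocks the surviving visit times $n_j+k$ (with $p\mid k$) realise the progression of the density estimate, so $\overline{d_a}(N(y,U))\ge\tfrac{1}{2p}>0$, which gives $UFHC_a$. The routine parts are the convergence of the series and the future-tail control via continuity of $T^m$. I expect the main obstacle to be the interaction between blocks, namely preventing the \emph{non-vanishing} periodic orbit of the partial sum $y_{j-1}$ from destroying the approximation of $u_j$ during its window; this is exactly what the recentring $x_j=u_j-y_{j-1}$ is designed to absorb, at the price of the period bookkeeping needed to make $(T^{n_j}-I)y_{j-1}$ negligible. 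Verifying that, after this bookkeeping, the cleanly surviving visit times still form an arithmetic progression dense enough in each window to deliver the uniform lower bound $\tfrac{1}{2p}$ is the crux of the argument.
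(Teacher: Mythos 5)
Your proposal follows essentially the same route as the paper: the same choice of $(\alpha_n)$ from the divergence of $\sum a_n$ (the paper's condition $\sum_{k=n}^{(1+\alpha_n)n-1}a_k\ge\tfrac12\sum_{k=0}^{(1+\alpha_n)n}a_k$ is your $S_{n(1+\alpha_n)}\ge 2S_n$), the same grouping-by-period density estimate yielding $\tfrac{1}{2p}$, and the same block construction with the recentred targets $u_j-y_{j-1}$, tail control via continuity of the finitely many $T^m$ on each window, and the adjustment making $n_j$ a multiple of the period of $y_{j-1}$ (which the paper realises exactly, not just approximately, by replacing $z$ with $T^{d'}z$ and $n$ with $n-d'$). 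The argument is correct and matches the paper's proof.
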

\begin{proof}
Let $a\in \mathcal{A}$ and $X_0$ a dense subspace of $X$ with $T(X_0)\subset X_0$ and $X_0\subset \text{Per}(T)$. There exists a non-decreasing sequence of positive integers $(\alpha_n)_{n\ge 1}$ such that for every $n\ge 1$, 
\begin{equation}
\frac{\sum_{k=n}^{(1+\alpha_{n})n-1} a_k}{\sum_{k=0}^{(1+\alpha_{n})n}a_k}\ge \frac{1}{2}.
\label{eq}
\end{equation}
Let $(x_l)_{l\ge 1}$ be a dense sequence in $X_0$, $(I_l)_{l\ge 1}$ a partition of $\mathbb{N}$ such that for every $l$, $I_l$ is an infinite set and $y_j=x_l$ if $j\in I_l$. If for every $x\in X_0$ and every $\varepsilon>0$, there exist $z\in X_0$ and $n\ge 1$ such that 
\begin{enumerate}
\item $\|z\|<\varepsilon$
\item $\|T^{n+k}z-T^kx\|<\varepsilon$ for every $0\le k\le \alpha_n n$,
\end{enumerate}
then we can construct a sequence $(z_j)_{j\ge 0}$ in $X_0$ and a strictly increasing sequence $(n_j)_{j\ge 0}$ with $z_0=0$ and $n_0=0$ such that for every $j\ge 1$,
\begin{enumerate}
\item $\|T^k z_j\|<2^{-j}$ for every $0\le k\le (1+\alpha_{n_{j-1}})n_{j-1}$;
\item $\|T^{n_j+k}z_j-T^k(y_j-\sum_{i<j}z_i)\|<2^{-j}$ for every $0\le k\le \alpha_{n_j} n_j$;
\item $n_j$ is a multiple of the period of the vector $\sum_{i<j}z_i$.
\end{enumerate}
We can assume that $n_j$ is a multiple of the period of the vector $\sum_{i<j}z_i$ because if $d$ is the period of $\sum_{i<j}z_i$, if $\|T^k z\|<2^{-j}\|T\|^{-d}$ for every $0\le k\le (1+\alpha_{n_{j-1}})n_{j-1}$ and if $\|T^{n+k}z-T^k(y_j-\sum_{i<j}z_i)\|<2^{-j}$ for every $0\le k\le \alpha_{n} n$, then there exists $d'\le d$ such that $n_j=n-d'$ is a multiple of $d$ and such that $z_j=T^{d'}z$ satisfies $(1)$ and $(2)$ since the sequence $(\alpha_k k)_k$ is increasing.

By letting $z=\sum_{i\ge 1}z_i$, we can then show that for every $l\ge 1$, every $\varepsilon>0$, there exists $j_0$ such that for every $j\ge j_0$, $j\in I_l$, the set $N(z,B(x_l,\varepsilon))$ contains $\{n_j+m\text{per}(x_l):0\le m\le \alpha_{n_j}n_j/\text{per}(x_l)\}$ (see \cite[Proof of Theorem 5.14]{Monster}).

Therefore, since the sequence $(a_k)_k$ is decreasing, we get for every $l\ge 1$, every $\varepsilon>0$,
\begin{align*}
\overline{d}_a(N(z,B(x_l,\varepsilon)))&\ge \limsup_{j\in I_l}
\frac{\sum_{m=0}^{\alpha_{n_j}n_j/\text{per}(x_l)} a_{n_j+m\text{per}(x_l)}}{\sum_{k=0}^{(1+\alpha_{n_j})n_j}a_k}\\
&\ge \limsup_{j\in I_l}
\frac{\sum_{k=n_j}^{(1+\alpha_{n_j})n_j-1} \frac{a_{k}}{\text{per}(x_l)}}{\sum_{k=0}^{(1+\alpha_{n_j})n_j}a_k}\ge \frac{1}{2\text{per}(x_l)} \quad\text{by \eqref{eq}.}
\end{align*}
Since $(x_l)$ is a dense sequence, we deduce that $z\in UFHC_a(T)$ and thus that $T$ is $UFHC_a$.
\end{proof}

In order to exhibit an operator which is UFHC$_a$ for every $a\in \mathcal{A}$ and not frequently hypercyclic, we will consider operators of C-type which have been introduced in \cite{Menet} to get a chaotic operator which is not frequently hypercyclic and have allowed to exhibit $\mathcal{U}$-frequently hypercyclic operators on Hilbert spaces which are not frequently hypercyclic \cite{Monster}.

An operator of C-type is associated to four 
parameters $v$, $w$, $\varphi $, and $b$ where

\begin{enumerate}
 \item[-] $v=(v_{n})_{n\ge 1}$ is a bounded sequence of non-zero complex numbers;
\item[-] $w=(w_{j})_{j\geq 1}$ is a sequence of complex numbers which is both bounded  
and bounded below, \mbox{\it i.e.} $0<\inf_{k\ge 1} \vert w_k\vert\leq \sup_{k\ge 1}\vert w_k\vert<\infty$;
\item[-] $\varphi $ is a map from $\N$ into itself, such that $\varphi 
(0)=0$, $\varphi (n)<n$ for every $n\ge 1$, and the set 
$\varphi ^{-1}(l)=\{n\ge 0\,:\,\varphi (n)=l\}$ is infinite for every 
$l\ge 0$;
\item[-] $b=(b_{n})_{n\ge 0}$ is a strictly increasing sequence of positive 
integers such that $b_{0}=0$ and $b_{n+1}-b_{n}$ is a multiple of 
$2(b_{\varphi (n)+1}-b_{\varphi (n)})$ for every $n\ge 1$.
\end{enumerate}

\begin{definition}\label{Definition 43}
 The \emph{operator of C-type} $T_{v,w,\varphi,b}$ associated to the data  
$v$, $w$, $\varphi $, and $b$ given as above is defined by
\[
T_{v,w,\varphi,b}\ e_k=
\begin{cases}
 w_{k+1}\, e_{k+1} & \textrm{if}\ k\in [b_{n},b_{n+1}-1),\; n\geq 0,\\
v_{n}\, e_{b_{\varphi(n)}}-\Bigl(\,\,\prod_{j=b_{n}+1}^{b_{n+1}-1}
w_{j}\Bigr)^{ -1 } e_{
b_{n}} & \textrm{if}\ k=b_{n+1}-1,\ n\ge 1,\\
 -\Bigl(\!\!\prod_{j=b_0+1}^{b_{1}-1}w_j\Bigr)^{-1}e_0& \textrm{if}\ 
k=b_1-1.
\end{cases}
\]

\end{definition}

We remark that the operator $T_{v,w,\varphi,b}$ is  well-defined and bounded on $\ell_1(\N)$ as soon as $\inf_{n\geq 0} \prod_{j=b_n+1}^{b_{n+1}-1} \vert w_j\vert >0$. From now on, we will always assume that this condition is satisfied. In \cite{Monster}, it was also assumed that $\sum_n |v_n|<\infty$ in order to get a continuous operator on $\ell_p(\N)$ for any $1\le p<\infty$. However, in our construction, we will need to consider a sequence $(v_n)$ which takes infinitely often the same values and we will thus restrict ourselves to operators on $\ell_1(\N)$ so that such a sequence $(v_n)$ can be considered.

Moreover, we impose the following restrictions on the parameters 
$v$, $w$, $\varphi$ and $b$. We first consider a map $\psi(k)=(\psi_1(k),\psi_2(k))$ such that for every $k\ge 1$, 
\begin{equation}
\label{psi2}
1\le \psi_1(k)< \min\{k+1,\psi_2(k)\}\quad\text{and}\quad \psi_2(k)> \max\{\psi_2(j):j< \psi_1(k)\}
\end{equation} and such that for every $i\ge 1$, there exists $j_i$ such that for every $j\ge j_i$, the set $\{k\ge 1:\psi(k)=(i,j)\}$ is infinite. We let $n_0=0$, $n_1=1$ and $n_{k+1}=n_k+\psi_1(k)$ for every $k\ge 1$. For every $k\ge 1$, every $n\in [n_k,n_{k+1})$, we then let $\varphi(n)=n-n_k$, $
v_n=v^{(k)}=2^{-\tau^{(\psi_2(k))}}$, $b_{n+1}-b_n=\Delta^{(k)}$ and for every $j\in [b_n+1,b_{n+1}-1]$,
\[
w_j=
\begin{cases}
  2 & \quad\text{if}\ \ b_n< j\le b_n+k\delta^{(\psi_2(k))}+2n+1\\
 1 & \quad\text{if}\ \ b_n+k\delta^{(\psi_2(k))}+2n+1< j<\Delta^{(k)}-3\delta^{(\psi_2(k))}-2n-1\\
 1/2 & \quad\text{if}\ \  \Delta^{(k)}-3\delta^{(\psi_2(k))}-2n-1\le i 
<\Delta^{(k)}-2\delta^{(\psi_2(k))}\\
 2 & \quad\text{if}\ \ \Delta^{(k)}-2\delta^{(\psi_2(k))}\le i< \Delta^{(k)}-\delta^{(\psi_2(k))}\\
 1& \quad\text{if}\ \ \Delta^{(k)}-\delta^{(\psi_2(k))}\le i<\Delta^{(k)}
\end{cases}
\]
where $(k+3)\delta^{(\psi_2(k))}+4n_{k+1}+2<\Delta^{(k)}$ for every $k\ge 1$, $(\delta^{(k)})$ and $(\tau^{(k)})$ are increasing sequences of positive integers and $\Delta^{(k)}$ is a multiple of $2\Delta^{(k-1)}$ with $\Delta^{(0)}=b_1-b_0$. Observe that for every $n\in [n_k,n_{k+1})$, we have
\[
\prod_{j=b_n+1}^{b_{n+1}-1}w_j=\prod_{i=1}^{\Delta^{(k)}-1}w_{b_n+i}=2^{k\delta^{(\psi_2(k))}}.
\]

The operator $T_{v,w,\varphi,b}$ associated to these parameters is then an operator of C-type on $\ell_1(\N)$ which will be denoted by $T$ from now and we will show that for a convenient choice of parameters $(\delta^{(k)})$, $(\tau^{(k)})$ and $(\Delta^{(k)})$, $T$ is an operator which is UFHC$_a$ for every $a\in \mathcal{A}$ and not frequently hypercyclic.\\

We first show that $T$ is UFHC$_{a}$ for every $a\in \mathcal{A}$ if $\delta^{(k)}-\tau^{(k)}$ tends to infinity by applying Theorem~\ref{UFHCa} with $X_0=\textrm{span}\,[e_{k}\,:\,k\ge 0]$ since operators of C-type have the property that for every $n\ge 0$, every $j\in[b_n,b_{n+1})$, $T^{2(b_{n+1}-b_n)}e_j=e_j$ (\cite[Lemma 6.4]{Monster}).

\begin{theorem}\label{UFHC}
If $\delta^{(k)}-\tau^{(k)}$ tends to infinity then $T$ is UFHC$_{a}$ for every $a\in \mathcal{A}$.
\end{theorem}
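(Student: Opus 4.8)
The plan is to apply Theorem~\ref{UFHCa} with $X_0=\mathrm{span}\,[e_k:k\ge 0]$. This subspace is dense, it satisfies $T(X_0)\subset X_0$ directly from Definition~\ref{Definition 43}, and $X_0\subset\mathrm{Per}(T)$ because $T^{2(b_{n+1}-b_n)}e_j=e_j$ for every $j\in[b_n,b_{n+1})$ by \cite[Lemma 6.4]{Monster}; concretely, $T^{2\Delta^{(k)}}$ fixes every basis vector of the block $[b_n,b_{n+1})$ whenever $n\in[n_k,n_{k+1})$. Fixing $a\in\mathcal A$, Theorem~\ref{UFHCa} provides a non-decreasing sequence $(\alpha_n)$, and it remains to produce, for each $x\in X_0$ and each $\varepsilon>0$, a vector $z\in X_0$ and an integer $n\ge 1$ with $\|z\|<\varepsilon$ and $\|T^{n+k}z-T^kx\|<\varepsilon$ for all $0\le k\le\alpha_n n$. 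By linearity it suffices to treat a block-start vector $x=e_{b_l}$: a general $x$ is handled by superposing the vectors $z$ obtained for the finitely many blocks meeting its support, all serviced from a single super-block $[n_k,n_{k+1})$ whose distinct sub-blocks connect, through $\varphi$, to all the lower blocks at once and thus share a common shift.

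The core construction exploits the connecting terms $v_n e_{b_{\varphi(n)}}$. Given $l$, I would use the defining property of $\psi$ (for each $i$ and each large $j$ the set $\{k:\psi(k)=(i,j)\}$ is infinite) to choose $k$ as large as needed with $\psi_1(k)>l$ and $\psi_2(k)=j$ arbitrarily large; then the block $n:=n_k+l\in[n_k,n_{k+1})$ satisfies $\varphi(n)=n-n_k=l$, so the end of block $n$ feeds into $e_{b_l}$ via $v_n=2^{-\tau^{(j)}}$. Since $\prod_{i=1}^{\Delta^{(k)}-1}w_{b_n+i}=2^{k\delta^{(j)}}$, after $\Delta^{(k)}$ steps the vector $e_{b_n}$ produces a term $2^{k\delta^{(j)}-\tau^{(j)}}e_{b_l}$ together with a correction supported on $e_{b_n}$. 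I would therefore take $z$ to be a small vector supported on block $n$, normalized so that its image under the shift $n:=\Delta^{(k)}$ equals $e_{b_l}$ up to a residual of size $O(2^{\tau^{(j)}-k\delta^{(j)}})$; here one uses that $\Delta^{(k)}$ is a multiple of $2\Delta^{(k_l)}$ (the period of $e_{b_l}$), so that $T^{\Delta^{(k)}}e_{b_l}=e_{b_l}$ and the shadowed orbit is genuinely periodic. The hypothesis $\delta^{(k)}-\tau^{(k)}\to\infty$ then makes both $\|z\|$ and this residual as small as we wish by taking $j=\psi_2(k)$ large.

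The hard part, and the only place where the growth condition and the fine structure of $w$ genuinely enter, is the uniform control of $\|T^{n+k}z-T^kx\|$ over the \emph{entire} window $0\le k\le\alpha_n n$, whose length is forced to exceed $\Delta^{(k)}$ since $n=\Delta^{(k)}$ and $\alpha_n\ge 1$. Because the weights $w_j$ equal $2$ along the first segment of each block, the orbit of a single residual basis vector $e_{b_n}$ would swell to $2^{k\delta^{(j)}+2n+1}$ and swamp the normalization. Overcoming this is precisely the purpose of the five-segment weight profile: I would engineer $z$ as a combination within block $n$ so that the contributions which would reach the high-growth portion are cancelled, while the balancing segments of weights $1/2$ and $2$ keep every partial product bounded by $2^{k\delta^{(j)}}$ up to a factor independent of the window, and so that the surviving error cascades, through the wrap to $e_{b_{\varphi(n)}}=e_{b_l}$, into a term of size $O(2^{\tau^{(j)}-k\delta^{(j)}})$. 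Tracking these partial products through all five segments for every $k$ in the window, and combining the resulting bound with $\delta^{(k)}-\tau^{(k)}\to\infty$ and the periodicity $T^{2\Delta^{(k)}}=\mathrm{id}$ on block $n$, is the technical heart of the argument; once it yields a bound below $\varepsilon$ uniformly on $[0,\alpha_n n]$, Theorem~\ref{UFHCa} delivers $UFHC_a$ for the given $a$, and since $a\in\mathcal A$ was arbitrary we conclude.
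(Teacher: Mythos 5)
Your overall strategy (apply Theorem~\ref{UFHCa} with $X_0=\mathrm{span}\,[e_k:k\ge 0]$, exploit the wrap-around term $v_n e_{b_{\varphi(n)}}$ by choosing $k$ with $\psi_1(k)>l$ and $\psi_2(k)=K$ large, and let $\delta^{(K)}-\tau^{(K)}\to\infty$ control the sizes) matches the paper's, but your choice of the shift $n$ creates a genuine gap. You take $n=\Delta^{(k)}$ and support $z$ at the start of a block, which forces the window $[0,\alpha_n n]$ to have length at least $\Delta^{(k)}$; the residual $-c\,e_{b_{n'}}$ produced by the wrap then traverses the entire initial weight-$2$ segment, its coefficient swelling by a factor $2^{k\delta^{(K)}+2n'+1}$, and you acknowledge this but propose to fix it by ``engineering $z$ as a combination within block $n$ so that the contributions reaching the high-growth portion are cancelled.'' This is not substantiated and does not reflect how the construction works: the residual $e_{b_{n'}}$ is an unavoidable by-product of the single wrap-around term in Definition~\ref{Definition 43}, any auxiliary component of $z$ introduced to cancel it generates its own residual with the same blow-up, and the five-segment weight profile is not designed to produce such cancellations.

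The paper avoids the problem entirely by taking the shift to be $n=2\delta^{(K)}$ --- much shorter than the block --- and supporting $z$ at the positions $b_{n_k+l+1}-2\delta^{(K)}+j-b_l$, i.e.\ at distance $2\delta^{(K)}-(j-b_l)$ from the \emph{end} of the blocks. After wrapping, the residual lands at $e_{b_{n_k+l}+j-b_l}$ near the beginning of the block with coefficient $\bigl(v^{(k)}\prod_{i=b_{n_k+l}+j-b_l+m+1}^{b_{n_k+l+1}-1}w_i\bigr)^{-1}$ (up to bounded factors), and the crucial quantitative step is the choice $k\ge 2\alpha_{2\delta^{(K)}}+1$: since the window length $\alpha_{2\delta^{(K)}}2\delta^{(K)}\le (k-1)\delta^{(K)}$ is smaller than the length $k\delta^{(K)}+2n+1$ of the initial run of weights $2$, the product $|v^{(k)}|\prod_{i=b_n+m+1}^{b_{n+1}-1}|w_i|$ stays at least $2^{\delta^{(K)}-\tau^{(K)}}$ throughout the window (equation~\eqref{Equation 10}), so the error remains below $\varepsilon$ without any cancellation. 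Note also that here the window length depends on $K$ alone while $k$ is chosen afterwards and arbitrarily large; your version, with $n=\Delta^{(k)}$, ties the window length to $k$ itself and loses this freedom. Without replacing your choice of $n$ and the unsupported cancellation step by this mechanism, the proof does not go through.
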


\begin{proof}
Let $a\in \mathcal{A}$ and $X_{0}=\textrm{span}\,[e_{k}\,:\,k\ge 0]$. We consider the non-decreasing sequence of positive integers $(\alpha_n)_{n\ge 1}$
 given by Theorem \ref{UFHCa} and we show that we can apply this one to $T$ if $\delta^{(k)}-\tau^{(k)}$ tends to infinity. Let $x\in X_{0}$ and
 $\varepsilon >0$. There exists $k_0\geq 1$ such that $x$ may be written as 
\[x=\sum_{l<n_{k_0}}\sum_{j=b_l}^{b_{l+1}-1}
x_{j}e_{j}.\]

Let $j_{n_{k_0}}$ such that for every $j\ge j_{n_{k_0}}$, the set $\{k\ge 1:\psi(k)=(n_{k_0},j)\}$ is infinite. We choose $K\ge j_{n_{k_0}}$ such that $\delta^{(K)}>b_{n_{k_0}}$ and such that
\[\frac{\|x\| (\sup_i |w_i|)^{b_{n_{k_0}}}}{2^{\delta^{(K)}-\tau^{(K)}}(\inf_i |w_i|)^{b_{n_{k_0}}}}<\varepsilon\] and we then choose $k\ge 1$ such that $k\ge 2\alpha_{2\delta^{(K)}}+1$, $\psi_1(k)= n_{k_0}$ and $\psi_2(k)=K$.
 It follows that for every $n\in [n_k,n_{k+1})$,
\begin{align}
|v^{(k)}|\prod_{i=b_{n+1}-2\delta^{(K)}}^{b_{n+1}-1}|w_{i}|=2^{\delta^{(K)}-\tau^{(K)}}
\label{Equation 9}
\end{align}
and
\begin{align}
|v^{(k)}|\prod_{i=b_n+m+1}^{b_{n+1}-1}|w_i|\ge 2^{\delta^{(K)}-\tau^{(K)}} \qquad
\textrm{for every}\ 0\le m\le \alpha_{2\delta^{(K)}} 2\delta^{(K)},
\label{Equation 10}
\end{align}
since $\alpha_{2\delta^{(K)}} 2\delta^{(K)}\le (k-1)\delta^{(K)} $ by assumption on $k$.
We then set
\begin{align*}
z:=\sum_{l<n_{k_0}}\ 
\sum_{j=b_l}^{b_{l+1}-1}
\Bigg[&x_{j}\
\Big({v^{(k)}}\!\!\!\!\!\!\!\!\!\;\;\;\;\;\;\prod_{
i=b_{n_k+l+1}-2\delta^{(K)}+j-b_{l}+1}^{b_{n_k+l+1}-1}w_{i}\Big)^{-1}\\
&\quad\Big(\,\prod_{i=1}^{j-b_l}w_{b_l+i} \Big)^{-1}
\ e_{b_{n_{k}+l+1}-2\delta^{(K)}+j-b_{l}}\Bigg].
\end{align*}
\par\smallskip
It remains to prove that \[ \|z\|<\varepsilon\qquad {\rm and}\qquad \|T^{\,2\delta^{(K)}+m}z-T^{\,m}x\|
<\varepsilon\quad\hbox{for every $0\le m\le \alpha_{2\delta^{(K)}} 2\delta^{(K)}$}
\] 
since the desired result will then follow from Theorem \ref{UFHCa} with $n=2\delta^{(K)}$. Note that the key point in the construction of $z$ relies on the fact that the weights appearing in the definition of $z$ and in particular the choice of $n$ only depend on $K$ while the orbit of $T^nz$ will follow the orbit of $x$ during a time depending on the size of the first block of 2 in $(w_j)_{j\in [b_n,b_{n+1})}$ for $n\in [n_k,n_{k+1})$ which can be arbitrarily big if $k$ is sufficiently big.
\par\smallskip 

We remark by applying \eqref{Equation 9} that
\begin{align*}
\|z\|&\le \|x\| \sup_{l<n_{k_0}}\sup_{j\in [b_l,b_{l+1})}\left({|v^{(k)}|}\!\!\!\!\!\!\!\!\!\;\;\;\;\;\;\prod_{
i=b_{n_k+l+1}-2\delta^{(K)}+j-b_{l}+1}^{b_{n_k+l+1}-1}|w_{i}|\right)^{-1}
\Bigl(\,\prod_{i=1}^{j-b_l}|w_{b_l+i}| \Bigr)^{-1}\\
&\le \frac{\|x\| (\sup_i |w_i|)^{b_{n_{k_0}}}}{2^{\delta^{(K)}-\tau^{(K)}}(\inf_i |w_i|)^{b_{n_{k_0}}}}< \varepsilon.
\end{align*}

Let us now estimate the norm of the vector $T^{\,2\delta^{(K)}+m}z-T^{\,m}x$ for every $0\le 
m\le \alpha_{2\delta^{(K)}} 2\delta^{(K)}$. 
Note that if $0\le l< n_{k_0}$ and 
$b_{l}\le j<b_{l+1}$, then 
$2\delta^{(K)}-(j-b_{l})> 1$ since $0\le j-b_{l}<b_{n_{k_0}}$ and $\delta^{(K)}>b_{n_{k_0}}$. Let $0\le l <n_{k_0}$. Since $n_k+l<n_k+\psi_1(k)=n_{k+1}$, we have for every $b_l\le j<b_{l+1}$,
\begin{align*}
T^{\,2\delta^{(K)}-(j-b_l)}e_{b_{n_k+l+1}-2\delta^{(K)}+j-b_{l}}=
&\Bigl(v^{(k)}\prod_{i=b_{n_k+l+1}-2\delta^{(K)}+j-b_{l}+1}^{b_{n_k+l+1}-1} w_i\Bigr)\, e_{b_l}\\
&-\Biggl(\; \prod_{i=b_{n_k+l}+1}^{b_{n_k+l+1}-2\delta^{(K)}+j-b_{l}} w_i\Biggr)^{-1} e_{b_{n_k+l}},
\end{align*}
and thus
\begin{align*}
 T^{\,2\delta^{(K)}}e_{b_{n_k+l+1}-2\delta^{(K)}+j-b_{l}}=
\quad \Biggl( v^{(k)}&
\!\!\!\!\!\;\;\;\;\prod_{i=b_{n_k+l+1}-2\delta^{(K)}+j-b_{l}+1}^{b_{n_k+l+1}-1} w_i\Biggr)\ 
\Bigl(\,\,\prod_{i=1}^{j-b_l}w_{b_l+i} \Bigr) \,
e_{j} \\
&- \Biggl(\!\!\!\!\!\;\;\;\;\prod_{i=b_{n_k+l}+j-b_l+1}^{b_{n_k+l+1}-2\delta^{(K)}+j-b_{l}} w_i\Biggr)^{-1} 
\, e_{b_{n_k+l}+j-b_{l}}.
\end{align*}
Therefore,
\begin{align*}
T^{2\delta^{(K)}}z=x-\sum_{l<n_{k_0}}\ 
\sum_{j=b_l}^{b_{l+1}-1}\Bigg[
&x_{j}\
\Big({v^{(k)}}\!\!\!\!\!\!\!\!\!\;\;\;\;\;\;\prod_{
i=b_{n_k+l+1}-2\delta^{(K)}+j-b_{l}+1}^{b_{n_k+l+1}-1}w_{i}\Big)^{-1}
\Big(\,\prod_{i=1}^{j-b_l}w_{b_l+i} \Big)^{-1}\\
&\quad\Big(\!\!\!\!\!\;\;\;\;\prod_{i=b_{n_k+l}+j-b_l+1}^{b_{n_k+l+1}-2\delta^{(K)}+j-b_{l}} w_i\Big)^{-1} 
\, e_{b_{n_k+l}+j-b_{l}}\Bigg].
\end{align*}
\par\smallskip
Moreover, if $0\le m\le \alpha_{2\delta^{(K)}} 2\delta^{(K)}$ then for every $0\le l< n_{k_0}$ and 
every $b_{l}\le j<b_{l+1}$,
\[
T^{\,m}\,e_{b_{n_k+l}+j-b_{l}}=\Biggl(\;\,\prod_{i=b_{n_k+l}+j-b_l+1}^{b_{n_k+l}+j-b_{l}+m}
w_{i}\Biggr)\,e_{b_{n_k+l}+j-b_{l}+m},
\]
because $ j-b_{l}+\alpha_{2\delta^{(K)}} 2\delta^{(K)}<b_{l+1}-b_{l}+(k-1)\delta^{(K)}
<b_{n_{k_{0}}}+(k-1)\delta^{(K)}<k\delta^{(K)}
<\Delta^{(k)}-3\delta^{(K)}$. So we get 
for any $0\leq m\leq \alpha_
{2\delta^{(K)}} 2\delta^{(K)}$,
\begin{align*}
T^{2\delta^{(K)}+m}z=T^{m}x-\sum_{l<n_{k_0}}\ 
\sum_{j=b_{l}}^{b_{l+1}-1}& \Bigg[x_{j}\Biggl(v^{(k)}\prod_{i=b_{n_k+l}+j-b_l+m+1}^{b_{n_k+l+1}-1}w_{i} 
\Biggr)^{-1}\\
&\Bigl(\,\,\prod_{i=1}^{j-b_l}w_{b_{l}+i}
\Bigr)^{-1}\,e_{b_{n_k+l}+j-b_{l}+m}\Bigg].
\end{align*}
We deduce from \eqref{Equation 10} that for any $0\leq m\leq \alpha_{2\delta^{(K)}} 2\delta^{(K)}$,

\[ \Vert T^{2\delta^{(K)}+m}z-T^{m}x\Vert< \frac{ \|x\| \sup_i |w_i|^{b_{n_{k_0}}}}{2^{\delta^{(K)}-\tau^{(K)}}\inf_i |w_i|^{b_{n_{k_0}}}}<\varepsilon.
\]
The assumptions 
of Theorem \ref{UFHCa} are thus satisfied and this concludes the 
proof.
\end{proof}

It remains to determine under which conditions $T$ is not frequently hypercyclic. To this end, it will be necessary to investigate the dynamical properties of finite sequences under the action of $T$. Therefore, we denote for every $l\ge 0$,
\[P_l x=\sum_{k=b_l}^{b_{l+1}-1}x_ke_k\quad \text{ and } X_{l}=\Bigl\|\sum_{k=b_{l}}^{b_{l+1}-1}\Bigl(\prod_{s=k+1}^{b_{l+1}-1}
w_{s}\Bigr)\,x_{k}e_{k}\Bigr\|.\]
We recall the following result proved in \cite{Monster}.
\begin{prop}[{\cite[Proposition 6.12]{Monster}}]\label{Proposition 50}
 Let $T$ be an operator of C-type on $\ell_{1}(\N)$, and  
 let $(C_{n})_{n\ge 
0}$ be a sequence of positive numbers with $0<C_n<1$. Assume that 
\[
|v_n|\ .\sup_{j\in 
[b_{\varphi(n)},b_{\varphi(n)+1})}\ \Bigl(
\prod_{s=b_{\varphi(n)}+1}^{j}|w_{s}|\Bigr)\le C_{n}\quad\hbox{  for every $n\ge 1$. }
\]
 Then, for any $x\in\ell_1(\N)$, 
we have for every $l\ge 1$ and 
every $0\le n<l$,
\begin{enumerate}
 \item [\rm {(1)}]$\quad
 {\sup_{j\ge 0}\ \|P_{n}T^{\,j}P_{l}\,x\|\le 
C_{l}\ X_l} $
\par\smallskip \noindent and
\item[\rm {(2)}]
$
\quad \sup_{j\le N}\|P_{n}T^{\,j}P_{l}\,x\|\le 
\displaystyle{C_l\Big(\sup_{b_{l+1}-N\le k<b_{l+1}}\prod_{s=k+1}^{b_{l+1}-1}|w_s|\Big) \|P_lx\|}
$
\par\smallskip \noindent for every $1\le N\le b_{l+1}-b_{l}$.
\end{enumerate}
\end{prop}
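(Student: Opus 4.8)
The plan is to describe the $T$-orbit of a single basis vector of the $l$-th block $[b_l,b_{l+1})$ and to follow the mass that leaks out of that block; by linearity and the triangle inequality this suffices, since $X_{l}=\sum_{k=b_l}^{b_{l+1}-1}|x_k|\prod_{s=k+1}^{b_{l+1}-1}|w_s|$ and $\|P_lx\|=\sum_{k=b_l}^{b_{l+1}-1}|x_k|$, so both estimates reduce to proving, for every $k\in[b_l,b_{l+1})$ and every $n<l$, that $\sup_{j\ge0}\|P_nT^je_k\|\le C_l\prod_{s=k+1}^{b_{l+1}-1}|w_s|$, together with the fact that $P_nT^je_k=0$ as long as $e_k$ has not yet reached $e_{b_{l+1}-1}$ (which yields (2)). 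Put $m_l:=b_{l+1}-b_l$ and let $D_l$ be the weighted cyclic shift on $[b_l,b_{l+1})$ obtained from $T$ by dropping the escape term $v_ne_{b_{\varphi(n)}}$; then $D_l^{m_l}=-\mathrm{Id}$ on that block. Because the escape term moves mass only to blocks $\varphi(l)<l$ and $\varphi$ is strictly decreasing, no mass ever re-enters block $l$, so $P_lT^je_k=D_l^je_k$ is a single basis vector, of $\ell_1$-norm $\prod_{s=k+1}^{b_{l+1}-1}|w_s|$ exactly when it sits on $e_{b_{l+1}-1}$, which happens precisely at the times $\theta_r-1$ with $\theta_r:=b_{l+1}-k+(r-1)m_l$, $r\ge1$. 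At step $\theta_r$ the vector injects into block $\varphi(l)$ the term $u_r:=(-1)^{r-1}\bigl(\prod_{s=k+1}^{b_{l+1}-1}w_s\bigr)v_l\,e_{b_{\varphi(l)}}$; in particular $u_{r+1}=-u_r$.

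The crux is that these injections do not accumulate. By \cite[Lemma 6.4]{Monster} one has $T^{2m_{\varphi(l)}}e_{b_{\varphi(l)}}=e_{b_{\varphi(l)}}$, and since $m_l$ is a multiple of $2(b_{\varphi(l)+1}-b_{\varphi(l)})=2m_{\varphi(l)}$ (the standing divisibility assumption on $b$), we get $T^{m_l}u_r=u_r$. Hence, for $n<l$,
\[
P_nT^je_k=\sum_{r\,:\,\theta_r\le j}P_nT^{\,j-\theta_r}u_r ,
\]
and since $T^{\,j-\theta_r}u_r=T^{\,j-\theta_{r+1}}(T^{m_l}u_r)=T^{\,j-\theta_{r+1}}u_r=-T^{\,j-\theta_{r+1}}u_{r+1}$, consecutive terms are opposite, so the whole sum reduces to $P_nT^{\,j-\theta_1}u_1$ or to $0$. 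It remains to bound $\|P_nT^{\,t}u_1\|$ uniformly in $t\ge0$ for $n<l$, where $u_1=\mu\,e_{b_{\varphi(l)}}$ and $|\mu|=|v_l|\prod_{s=k+1}^{b_{l+1}-1}|w_s|$.

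This I would obtain by strong induction on the block index: for every block $m$ and every $y$ supported in $[b_m,b_{m+1})$, $\sup_{t\ge0}\|P_nT^{\,t}y\|\le C_mX_m(y)$ for all $n<m$. Applied to the data above: if $\varphi(l)<n<l$ then $P_nT^{\,t}u_1=0$; if $n=\varphi(l)$ then $\|P_nT^{\,t}u_1\|=|\mu|\,\|D_{\varphi(l)}^{\,t}e_{b_{\varphi(l)}}\|\le|\mu|\sup_{b_{\varphi(l)}\le j<b_{\varphi(l)+1}}\prod_{s=b_{\varphi(l)}+1}^{j}|w_s|\le C_l\prod_{s=k+1}^{b_{l+1}-1}|w_s|$ by the hypothesis applied with $n=l$; and if $n<\varphi(l)$ the induction hypothesis for block $\varphi(l)$ gives $\|P_nT^{\,t}u_1\|\le C_{\varphi(l)}X_{\varphi(l)}(u_1)=C_{\varphi(l)}|\mu|\prod_{s=b_{\varphi(l)}+1}^{b_{\varphi(l)+1}-1}|w_s|\le C_{\varphi(l)}C_l\prod_{s=k+1}^{b_{l+1}-1}|w_s|\le C_l\prod_{s=k+1}^{b_{l+1}-1}|w_s|$, again using the hypothesis at $n=l$ (now to bound $|v_l|\prod_{s=b_{\varphi(l)}+1}^{b_{\varphi(l)+1}-1}|w_s|\le C_l$) and $C_{\varphi(l)}<1$. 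Summing over $k$ proves (1). For (2): if $1\le N\le m_l$ and $j\le N$, only the coordinates $k\in[b_{l+1}-N,b_{l+1})$ of $P_lx$ have had time to reach $e_{b_{l+1}-1}$, and each wraps at most once within $N$ steps, so $P_nT^jP_lx$ is a sum over those $k$ of contributions already bounded by $|x_k|\,C_l\prod_{s=k+1}^{b_{l+1}-1}|w_s|$, whose total is $\le C_l\bigl(\sup_{b_{l+1}-N\le k<b_{l+1}}\prod_{s=k+1}^{b_{l+1}-1}|w_s|\bigr)\|P_lx\|$.

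The main obstacle is exactly the no-accumulation step: by time $j$ there have been of order $j/m_l$ injections below block $l$, each of norm comparable to $C_lX_l$, so a bound independent of $j$ can only come from exact cancellation. This cancellation rests on three facts working together — successive injected terms are opposite, they are spaced by $m_l$ time-steps, and $m_l$ is a period of the receiving basis vector (here \cite[Lemma 6.4]{Monster} and the divisibility of the $b_n$ enter). The remaining ingredients — the weight-product bookkeeping and the well-foundedness of the induction, guaranteed by $\varphi$ strictly decreasing the block index at each escape — are routine.
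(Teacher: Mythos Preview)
The paper does not supply its own proof of this proposition; it merely quotes the statement from \cite[Proposition~6.12]{Monster}. Your argument is correct and is essentially the standard proof: track the orbit of a single basis vector $e_k$ in block~$l$, observe that at each wrap-around time $\theta_r$ a scalar multiple of $e_{b_{\varphi(l)}}$ is injected below, use the divisibility hypothesis on $b$ together with \cite[Lemma~6.4]{Monster} to see that these injections are $T^{m_l}$-periodic and alternate in sign, hence cancel in pairs, and then bound the single surviving term by induction on the block index using the hypothesis on $|v_l|$ and $C_{\varphi(l)}<1$. Two cosmetic remarks: when you write ``$\varphi$ is strictly decreasing'' you mean the property $\varphi(n)<n$ for $n\ge1$ (which is what you actually use to ensure mass never re-enters a block and that the induction is well-founded); and it is worth stating the base case $l=1$ explicitly, where $\varphi(1)=0$ forces $n=0=\varphi(l)$ and only the direct estimate via the hypothesis is needed.
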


We will also need a simple adaptation of \cite[Proposition 6.13]{Monster}.

\begin{prop}\label{Proposition 51}
 Let $T$ be an operator of C-type on $\ell_{p}(\N)$
  and let $x\in
 \ell_p(\N)$.  
 Suppose that there exist two integers $0\le 
K_{0}<K_{1}\le b_{l+1}-b_{l}$ such that 
\[
|w_{b_{l}+k}|=1\quad\textrm{for every}\ 
k\in(K_{0},K_{1})\quad\textrm{and}\ 
\prod_{s=b_l+K_{0}+1}^{b_{l+1}-1}|w_{s}|=\alpha.
\]
Then we have for every $J\ge 0$,
\begin{align*}
 &\dfrac{1}{J+1}\ \#\Bigl\{0\le j\le J\,:\, \|P_{l}T^{\,j}P_{l}\,x\|\ge 
\alpha^{-1}X_{l}/2\Bigr\}\\
 &\hspace{3cm}\ge 
1-2\bigl(b_{l+1}-b_l-(K_1-K_0)\bigr)\,\cdot\,\Bigl( \frac{1}{J+1}+\frac{1}{b_{
l+1}-b_{l}}\Bigr)\cdot
\end{align*}
\end{prop}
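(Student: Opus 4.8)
The plan is to reduce the statement to the behaviour of $T$ on the single block $[b_{l},b_{l+1})$, where it acts as a weighted cyclic permutation, and then to run a Markov averaging argument. Write $d=b_{l+1}-b_{l}$. As in the proof of \cite[Proposition~6.13]{Monster}, since $\varphi(n)<n$ for every $n\ge 1$, no coordinate in a block different from $l$ ever feeds back into the block $[b_{l},b_{l+1})$ once one starts from $P_{l}x$, so that $P_{l}T^{\,j}P_{l}x=(P_{l}TP_{l})^{\,j}P_{l}x$ for every $j\ge 0$, where $P_{l}TP_{l}$ sends $e_{b_{l}+i}$ to $w_{b_{l}+i+1}e_{b_{l}+i+1}$ for $0\le i<d-1$ and $e_{b_{l+1}-1}$ to $-\bigl(\prod_{s=b_{l}+1}^{b_{l+1}-1}w_{s}\bigr)^{-1}e_{b_{l}}$. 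If $P_{l}x=0$ there is nothing to prove, so we may assume $X_{l}>0$.

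First I would pass to renormalised coordinates: set $\tilde x_{b_{l}+i}:=\bigl(\prod_{s=b_{l}+i+1}^{b_{l+1}-1}w_{s}\bigr)x_{b_{l}+i}$ for $0\le i<d$, so that $X_{l}=\bigl(\sum_{i=0}^{d-1}|\tilde x_{b_{l}+i}|^{p}\bigr)^{1/p}$. A direct computation shows that in these coordinates $P_{l}TP_{l}$ acts as the forward cyclic shift $i\mapsto i+1\pmod d$ together with a single sign change at the wrap-around; in particular $X_{l}$ is preserved along the orbit (indeed $(P_{l}TP_{l})^{d}=-\mathrm{Id}$ on the block), and if $(\tilde c^{\,(j)}_{b_{l}+i})_{i}$ denotes the renormalised coordinate vector of $P_{l}T^{\,j}P_{l}x$, then $|\tilde c^{\,(j)}_{b_{l}+i}|=|\tilde x_{b_{l}+((i-j)\bmod d)}|$ and $\sum_{i=0}^{d-1}|\tilde c^{\,(j)}_{b_{l}+i}|^{p}=X_{l}^{p}$ for every $j$.

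Next I would locate the ``good window''. From the hypotheses $|w_{b_{l}+k}|=1$ for $k\in(K_{0},K_{1})$ and $\prod_{s=b_{l}+K_{0}+1}^{b_{l+1}-1}|w_{s}|=\alpha$ one gets $\prod_{s=b_{l}+i+1}^{b_{l+1}-1}|w_{s}|=\alpha$ for every $i$ in $W:=\{K_{0},\dots,K_{1}-1\}$, a set of $K_{1}-K_{0}$ indices. Consequently, for every $j$,
\[
\|P_{l}T^{\,j}P_{l}x\|^{p}\ \ge\ \alpha^{-p}\sum_{i\in W}\bigl|\tilde c^{\,(j)}_{b_{l}+i}\bigr|^{p}\ =\ \alpha^{-p}\sum_{i\in W}\bigl|\tilde x_{b_{l}+((i-j)\bmod d)}\bigr|^{p}.
\]
Summing over one full period $j=0,\dots,d-1$, each $|\tilde x_{b_{l}+k}|^{p}$ is counted exactly $|W|$ times, hence $\sum_{j=0}^{d-1}\sum_{i\notin W}|\tilde c^{\,(j)}_{b_{l}+i}|^{p}=(d-(K_{1}-K_{0}))X_{l}^{p}$. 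By Markov's inequality the number of $j\in\{0,\dots,d-1\}$ with $\sum_{i\notin W}|\tilde c^{\,(j)}_{b_{l}+i}|^{p}\ge X_{l}^{p}/2$ is at most $2(d-(K_{1}-K_{0}))$; for every remaining $j$ in the period one has $\sum_{i\in W}|\tilde c^{\,(j)}_{b_{l}+i}|^{p}>X_{l}^{p}/2$, so $\|P_{l}T^{\,j}P_{l}x\|>\alpha^{-1}X_{l}/2^{1/p}\ge\alpha^{-1}X_{l}/2$ since $p\ge 1$.

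Finally, since $\|P_{l}T^{\,j}P_{l}x\|$ depends only on $j\bmod d$, the set $G$ of indices $j$ with $\|P_{l}T^{\,j}P_{l}x\|\ge\alpha^{-1}X_{l}/2$ is $d$-periodic and omits at most $2(d-(K_{1}-K_{0}))$ residues per period. Covering $\{0,\dots,J\}$ by $\lceil(J+1)/d\rceil$ consecutive full periods gives $\#(\{0,\dots,J\}\setminus G)\le 2(d-(K_{1}-K_{0}))\lceil(J+1)/d\rceil\le 2(d-(K_{1}-K_{0}))\bigl(\tfrac{J+1}{d}+1\bigr)$; dividing by $J+1$ and recalling $d=b_{l+1}-b_{l}$ yields exactly the claimed inequality. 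The only delicate point is the bookkeeping that turns $P_{l}TP_{l}$ into a genuine cyclic shift in the renormalised coordinates and gives the invariance of $X_{l}$, but this is precisely the content of the corresponding step in \cite[Section~6]{Monster}; the rest is the elementary averaging argument above, which is why the result is only a ``simple adaptation'' of \cite[Proposition~6.13]{Monster}.
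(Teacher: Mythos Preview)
Your argument is correct and is exactly the ``simple adaptation'' the paper has in mind: the paper does not supply a proof of this proposition, it merely cites \cite[Proposition~6.13]{Monster}, and your renormalisation $\tilde x_{b_l+i}=(\prod_{s=b_l+i+1}^{b_{l+1}-1}w_s)x_{b_l+i}$ turning $P_lTP_l$ into a signed cyclic shift, followed by the Markov averaging over one period and the periodic covering of $\{0,\dots,J\}$, is precisely that argument. The only point worth recording explicitly is that the definition of $X_l$ given just before Proposition~\ref{Proposition 50} is stated in the ambient $\ell_1$ context, whereas Proposition~\ref{Proposition 51} is formulated on $\ell_p$; your reading $X_l=(\sum_i|\tilde x_{b_l+i}|^p)^{1/p}$ is the intended one, and with it every step (in particular the inequality $\alpha^{-1}X_l/2^{1/p}\ge \alpha^{-1}X_l/2$ for $p\ge 1$) goes through as you wrote.
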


We can now state sufficient conditions for $T$ not to be frequently hypercyclic by following the proof of \cite[Lemma 6.11]{Monster}.

\begin{theorem}\label{FHC}
For every $k\geq 1$, set $\gamma_k:=2^{k\delta^{(k-1)}+2n_{k}+1-\tau^{(k)}}$.
 Suppose that for every $k \ge 1$, $\gamma_k\le \min\{2^{-16k}, \min_{s\le k-1}\gamma^2_{s}\}$ and that the following two conditions are satisfied:
 \[ \lim_{k\to\infty }\dfrac{k\delta ^{(k)}}{\delta 
^{(k+1)}}=0 \quad \text{and} \quad \lim_{k\to\infty }\ \dfrac{k\delta ^{(\psi_2(k))}+n_{k+1}}{\Delta 
^{(k)}}=0.
\]
\par\smallskip \noindent
Then $T$ is not frequently hypercyclic.
\end{theorem}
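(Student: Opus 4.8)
The plan is to argue by contradiction, following the scheme of \cite[Lemma 6.11]{Monster}. Suppose that $T$ is frequently hypercyclic and fix a frequently hypercyclic vector $x\in\ell_1(\N)$, normalized so that $\|x\|\le 1$. The guiding principle is that the dynamically meaningful quantity is not the raw $\ell_1$-norm of a block but the weighted block-norm $X_l$ introduced before Proposition~\ref{Proposition 50}: inside a block the dynamics is a cyclic weighted shift for which $X_l$ is the natural invariant, while the transitions at the ends of blocks move only a controlled amount of weighted mass between blocks. The first step is therefore to choose, for each level $k$, an open set $U_k$ (a small ball around a suitable unit vector supported in a fixed low block and carrying large weighted norm) such that $T^jx\in U_k$ forces $X_l(T^jx)$ to exceed a threshold $\theta_k$ for the high block $l$ feeding $U_k$ through the map $\varphi$. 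Frequent hypercyclicity then gives $\underline d(N(x,U_k))>0$, so for each $k$ the set of times at which some block carries weighted mass at least $\theta_k$ has positive lower density.

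Next I would quantify the two transport mechanisms. Proposition~\ref{Proposition 50} bounds the downward leakage $\sup_j\|P_nT^jP_lx\|$ by $C_lX_l$ for $n<l$, where $C_l$ is governed by the products $|v_n|\prod|w_s|$; a direct computation identifies the worst case with $\gamma_k$, and the hypotheses $\gamma_k\le\min\{2^{-16k},\min_{s\le k-1}\gamma_s^2\}$ make these constants decay super-geometrically. Consequently the total contamination of any fixed low block by all higher blocks is summable and arbitrarily small, so to make $X_l(T^jx)$ large one genuinely needs weighted mass of order $\theta_k$ already present in block $l$: it cannot be manufactured from the tails of many faraway blocks. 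On the other hand, Proposition~\ref{Proposition 51}, applied with the middle run of $1$'s and $\alpha=2^{-2n-1}$ read off from the tail weight pattern, shows that once a block carries weighted mass $X_l$, its $\ell_1$-mass $\|P_lT^jP_lx\|$ stays bounded below by $\alpha^{-1}X_l/2$ for a fraction of times tending to $1$.

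The contradiction then comes from a counting argument against the $\ell_1$-budget $\|x\|\le 1$. Persistence (Proposition~\ref{Proposition 51}) together with the leakage bound (Proposition~\ref{Proposition 50}) shows that at any instant only boundedly many blocks can simultaneously carry weighted mass of a given size, and that a loaded block stays loaded, so the same budget cannot be recycled quickly between the infinitely many targets $U_k$. The condition $\lim_k\frac{k\delta^{(\psi_2(k))}+n_{k+1}}{\Delta^{(k)}}=0$ guarantees that the active part of each block (the initial run of $2$'s and the end patterns) occupies a vanishing fraction of its length $\Delta^{(k)}$, so the phases of the cyclic orbit during which a return to $U_k$ is possible have vanishing density inside each period; the condition $\lim_k\frac{k\delta^{(k)}}{\delta^{(k+1)}}=0$ separates the scales $\delta^{(k)}$ across levels so that the estimates at different levels do not accumulate. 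Choosing windows $[M_k,N_k]$ just before the high blocks activate, so that only lower-level injections have occurred and their combined density is forced small, yields $\underline d(N(x,U))=0$ for a suitable $U$ among the $U_k$, contradicting frequent hypercyclicity.

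The main obstacle is the uniform multi-scale bookkeeping: one must control, simultaneously and uniformly in $N$, the injections into a fixed low block coming from \emph{all} higher blocks while tracking the phase of each block's cyclic orbit, and turn the heuristic that only finitely many blocks can be loaded at once into a quantitative statement valid along arbitrarily long time windows. The conditions on $\gamma_k$ and the two limits are precisely what render all the resulting error terms summable and the offending densities negligible; carrying out this bookkeeping faithfully, as in \cite[Lemma 6.11]{Monster}, is the crux of the proof.
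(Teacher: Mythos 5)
There is a genuine gap: your sketch reproduces the general flavour of \cite[Lemma 6.11]{Monster} (contradiction, Propositions~\ref{Proposition 50} and~\ref{Proposition 51}, the role of $\gamma_k$ and of the two limit conditions) but omits the central construction that actually produces the contradiction, and the substitute you propose does not work as described. The paper's witness open set is the ball $B(0,X_{l_0})$ \emph{around the origin}, not balls $U_k$ around unit vectors supported in low blocks. The mechanism is: one inductively builds blocks $l_m$ and times $j_m$, where $j_m$ is the \emph{first} instant at which the leakage from higher blocks into block $l_m$ exceeds $X_{l_m}$. Hypercyclicity forces $j_m<\infty$ (the orbit must approach $0$, and before time $j_m$ one has $\|T^jx\|\ge\|P_{l_m}T^jP_{l_m}x\|-X_{l_m}$, which Proposition~\ref{Proposition 51} keeps large for a proportion of times tending to $1$); at time $j_m$ some specific higher block $l_{m+1}$ must carry $X_{l_{m+1}}\ge X_{l_m}/(2^{l_{m+1}}\gamma_{\psi_2(k_{m+1})}^{1/2})$ by Proposition~\ref{Proposition 50}(1), and Proposition~\ref{Proposition 50}(2) yields the lower bound $j_m>\delta^{(\psi_2(k_{m+1}))}$, which is exactly where the hypothesis $\lim_k k\delta^{(k)}/\delta^{(k+1)}=0$ enters. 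The conclusion is that $N(x,B(0,X_{l_0})^c)$ has upper density $1$ along the windows $[0,j_m)$, hence $\underline{d}(N(x,B(0,X_{l_0})))=0$. None of this inductive amplification appears in your proposal.

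Your replacement argument has two concrete problems. First, the claim that $T^jx\in U_k$ forces a threshold on ``the high block $l$ feeding $U_k$ through $\varphi$'' is not well posed: for each low block $l$ the fibre $\varphi^{-1}(l)$ is infinite, so mass arriving in a low block could come from any of infinitely many high blocks, and no single block is singled out. Second, the ``counting argument against the $\ell_1$-budget'' does not go through as stated: the quantities $X_l$ involve the weight products $\prod_s w_s$, which vary across blocks by factors as large as $2^{k\delta^{(\psi_2(k))}}$, so ``weighted mass of a given size'' in different blocks corresponds to wildly incommensurable amounts of $\ell_1$-norm, and no uniform pigeonhole bound on the number of simultaneously loaded blocks follows from $\|x\|\le 1$. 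To repair the proof you would need to replace the budget argument by the paper's inductive scheme (or a genuine equivalent) and take the neighbourhood of the origin as the open set witnessing failure of frequent hypercyclicity.
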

\begin{proof}
Let $x$ be a hypercyclic vector of $T$. It suffices to show that $x$ cannot be a frequently hypercyclic vector.
We first consider $k_0\ge 1$ and $l_{0}\in [n_{k_0},n_{k_0+1})$ such that 
\[
\|P_{l_{0}}\,x\|\ge \frac{\gamma_{\psi_2(k_0)}^{1/2}}{2^{l_0}}\,\|x-P_{0}\,x\|.
\]
Such integers $k_0$ and $l_0$ exist because $\sum_{k\ge 1}\gamma_{\psi_2(k)}^{1/2}\sum_{l=n_{k}}^{n_{k+1}-1}\frac{1}{2^{l}}\le 1$ since $\gamma_k\le 1$ for every $k\ge 1$. Moreover, we remark that $x-P_{0}\,x$ is non-zero since $x$ is hypercyclic.

We now construct a strictly increasing  sequence of integers 
$(l_{m})_{m\ge 0}$ such that for every $m\ge 1$, if we set 
\[
 j_{m-1}:=\smash[b]{\min\,\biggl\{j\ge \,
0\,:\,\sum_{l>l_{m-1}}\|P_{l_{m-1}}T^{j}P_{l}\,x\|>X_{l_{m-1}}\biggr\},
}\]
and if $l_m\in [n_{k_m},n_{k_{m}+1})$ then
\begin{equation}
\psi_1(k_m)> l_{m-1},\quad
j_{m-1}>\delta^{(\psi_2(k_m))}\quad\textrm{and}\quad 
X_{l_{m}}\ge\frac{1}{2^{l_m}\gamma_{\psi_2(k_m)}^{1/2}}\,X_{l_{m-1}}.
\label{eq2}
\end{equation}

Assume that the integers $l_{1},\dots,l_{m-1}$ 
have already been constructed and satisfy~\eqref{eq2}. We first remark that the integer $j_{m-1}$ is well-defined since $x$ is hypercyclic and thus \[\limsup_j\sum_{l>l_{m-1}}\|P_{l_{m-1}}T^{j}P_{l}\,x\|=\infty.\]
Moreover, we can find an integer $S\ge 1$ 
such that 
\begin{equation}\label{Equation 16}
\sum_{k:\psi_2(k)=S}\sum_{l\in [n_k,n_{k+1})\cap(l_{m-1},\infty)} \|P_{l_{m-1}}T^{\,j_{m-1}}P_{l}\,x\|>\gamma
_S^{1/2}X_{l_{m-1}},
\end{equation}
because if we had $\sum_{k:\psi_2(k)=S}\sum_{l\in [n_k,n_{k+1})\cap(l_{m-1},\infty)} \|P_{l_{m-1}}T^{\,j_{n-1}}P_{l}\,x\|\le\gamma_S^{1/2}X_{l_{m-1}}$ for every $S\ge 1$, this would imply that 
$\sum_{l>l_{m-1}}\|P_{l_{m-1}}T^{\,j_{m-1}}P_{l}\,x\|\le 
\big
(\sum_{S=1}^{\infty}\gamma_S^{1/2}\big)X_{l_{m-1}}\le X_{l_{m-1}}$, violating the definition of $j_{m-1}$.
\par\smallskip
Therefore, there exist $k_m$ with $\psi_2(k_m)=S$ and $l_m\in [n_{k_m},n_{k_m+1})\cap(l_{m-1},\infty)$ such that
\begin{equation}
\|P_{l_{m-1}}T^{\,j_{m-1}}P_{l_m}\,x\|>\frac{\gamma_{\psi_2(k_m)}^{1/2}}{2^{l_m}}X_{l_{m-1}}.
\label{eq3}
\end{equation}
In particular, since $X_{l_{m-1}}>0$ and since $l_m>l_{m-1}$, we have $\psi_1(k_m)>l_{m-1}$.
Let $k\ge 1$, $l\in [n_k,n_{k+1})$ and $\kappa$ such that $\varphi(l)\in [n_{\kappa},n_{\kappa+1})$. Since $\kappa\le n_{\kappa}\le \varphi(l)<\psi_1(k)$, we have 
$\psi_{2}(\kappa)< \psi_{2}(k)$ by \eqref{psi2} and since $\varphi(l)<\psi_1(k)< \psi_{2}(k)\le n_{\psi_{2}(k)}$, we have
\begin{align*}
&|v_{l}|\ .\sup_{j\in 
[b_{\varphi(l)},b_{\varphi(l)+1})}\ \Bigl(
\prod_{s=b_{\varphi(l)}+1}^{j}|w_{s}|\Bigr)\\
&\quad \le 2^{-\tau^{(\psi_2(k))}}2^{\kappa\delta^{(\psi_2(\kappa))}+2\varphi(l)+1}\\
&\quad \le 2^{-\tau^{(\psi_2(k))}}2^{\psi_{2}(k)\delta^{(\psi_2(k)-1)}+2n_{\psi_{2}(k)}+1}=\gamma_{\psi_{2}(k)}.
\end{align*}
It follows from Proposition~\ref{Proposition 50} and \eqref{eq3} that
\[
X_{l_{m}}\ge\dfrac{1}{\gamma_{\psi_2(k_m)}}\|P_{l_{m-1}}T^{\,j_{m-1}}P_{l_{m}}\,x\|> \frac{1}{2^{l_m}\gamma_{\psi_2(k_m)}^{1/2}}X_{l_{m-1}}.
\]
On the other hand, for every $1\le s\le m$, we have $2^{l_{s-1}}\gamma_{\psi_2(k_s)}^{1/2}\le 2^{4l_{s-1}}\gamma_{\psi_2(k_s)}^{1/4}\le 2^{4(l_{s-1}-\psi_2(k_s))}\le 1$  since $\psi_2(k_s)>\psi_1(k_s)> l_{s-1}$. It follows from Proposition~\ref{Proposition 50} that for every $0\le 
j\le \delta^{(S)}=\delta^{(\psi_2(k_m))}$,
\begin{align*}
\sum_{k:\psi_2(k)=S}\sum_{l\in [n_{k},n_{k +1})\cap (l_{m-1},\infty)}\|P_{l_{m-1}}T^{\,j}P_{l}\,x\|&\le \gamma_{S}\sum_{k:\psi_2(k)=S}\sum_{l\in [n_{k},n_{k +1})\cap (l_{m-1},\infty)}\|P_{l}
\,x\|\\
&\le \gamma_{\psi_2(k_m)}\,\|x-P_{0}x\|\\
&\le \frac{2^{l_0}\gamma_{\psi_2(k_m)}}{\gamma_{\psi_2(k_0)}^{1/2}}\,\|P_{l_{0}}\,x\|\le \frac{2^{3l_0+1}\gamma_{\psi_2(k_m)}}{\gamma_{\psi_2(k_0)}^{1/2}}\,X_{l_0}\\
&\le \frac{2^{3l_0+1}\gamma_{\psi_2(k_m)}}{\gamma_{\psi_2(k_0)}^{1/2}}\Big(\prod_{s=1}^{m-1}2^{l_s}\gamma_{\psi_2(k_s)}^{1/2}\Big)X_{l_{m-1}}\\
&\le \frac{2^{l_{m-1}+2l_0+1}\gamma_{\psi_2(k_m)}}{\gamma_{\psi_2(k_0)}^{1/2}}X_{l_{m-1}}\\
&\le \frac{2^{4l_{m-1}}\gamma_{\psi_2(k_m)}}{\gamma_{\psi_2(k_0)}^{1/2}}X_{l_{m-1}}\le \gamma_{\psi_2(k_m)}^{1/2}X_{l_{m-1}}
\end{align*}
since $2^{4l_{m-1}}\gamma_{\psi_2(k_m)}^{1/4}\le 1$ and $\gamma_{\psi_2(k_m)}^{1/4}\le \gamma_{\psi_2(k_0)}^{1/2}$ because $\psi_1(k_m)> l_{m-1}\ge l_0\ge n_{k_0}\ge k_0$ and thus $\psi_2(k_0)< \psi_2(k_m)$ by \eqref{psi2}. We deduce from \eqref{Equation 16} that $j_{m-1}>\delta^{(\psi_2(k_m))}$ and thus that a sequence $(l_m)_{m\ge 0}$ satisfying \eqref{eq2} can be constructed.

Note that the sequences $(j_{m})_{m\ge 0}$, $(\psi_2(k_m))_{m\ge 0}$ and $(k_m)_{m\ge 0}$ tends to infinity as $m$ tends to infinity since $\psi_2(k_m)> \psi_1(k_m)> l_{m-1}$ and $(l_m)_{m\ge 0}$ is increasing. Moreover, we have for every $m\ge 1$,
\[ X_{l_0}\le \big(\prod_{s=1}^{m}2^{l_s}\gamma^{1/2}_{\psi_2(k_s)}\big)X_{l_{m}}\le \gamma^{1/2}_{\psi_2(k_1)}2^{l_m}X_{l_m}\le 2^{l_m}X_{l_m}.\] 
\par\smallskip

For every $j\ge 0$ and $n\ge 0$, since 
\[
\smash[b]{\|T^{\,j}\,x\|\ge\|P_{n}\,T^{\,j}\,x\|\ge 
\|P_{n}\,T^{\,j}P_{n}\,x\|-
\sum_{l>n}\|P_{n}\,T^{\,j}P_{l}\,x\|},
\]
we have by definition of $j_m$ 
\[
\|T^{\,j}x\|\ge \|P_{l_{m}}T^{\,j}P_{l_{m}}\,x\|-X_{l_{m}}\quad  
\textrm{for every}\ 0\le j< j_{m}.
\]
It follows from this inequality  that for every $m\ge 1$,
\begin{align*}
& \bigl\{0\le j\le j_{m}-1\,:\,\|P_{l_{m}}T^{\,j}P_{l_{m}}\,x\|
\ge 2^{2l_{m}}X_{l_{m}}\bigr\}\\
&\quad\subseteq
 \bigl\{0\le j\le j_{m}-1\,:\,\|P_{l_{m}}T^{\,j}P_{l_{m}}\,x\|
\ge (2^{l_{m}}+1)X_{l_{m}}\bigr\}\\
&\quad\subseteq \bigl\{0\le j\le j_{m}-1\,:\,\|T^{\,j}x\|\ge 2^{l_{m}}X_{l_{m}}\bigr\}\\
&\quad\subseteq
\bigl\{0\le j\le j_{m}-1\,:\,\|T^{\,j}x\|\ge X_{l_{0}}\bigr\}.
\end{align*}
Hence, we have
\[
 \overline{\textrm{dens}}\ N\bigl(x,B(0,X_{l_0} )^c\bigr)\ge 
\limsup_{m}\ \dfrac{\ \#\bigl\{0\le 
j\le 
j_m -1\,:\, \|P_{l_{m}}\,T^{\,j}P_{l_{m}}\,x\|\ge 
2^{2l_{m}}X_{l_{m}}\bigr\}}{j_m}\cdot 
\]

Finally, by applying Proposition~\ref{Proposition 51} with $K_0=k_m\delta^{\psi_2(k_m)}+2l_m+1$, $K_1=\Delta^{(k_m)}-3\delta^{(\psi_2(k_m))}-2l_m-1$ and thus $\alpha=2^{-(2l_m+1)}$, we get for every $J\ge 0$,
\begin{align*}
&\dfrac{\ \#\bigl\{0\le 
j\le 
J\,:\, \|P_{l_{m}}\,T^{\,j}P_{l_{m}}\,x\|\ge 
2^{2l_{m}}X_{l_{m}}\bigr\}}{J+1}\\
&\quad\quad\quad \ge 1-2\Big((k_m+3)\delta^{(\psi_2(k_m))}+4l_m+2\Big)\Bigl(\dfrac{1}{J+1}+\dfrac{1}{\Delta^{(k_m)}} \Bigr).
\end{align*}
We deduce that
\begin{align*}
&\overline{\textrm{dens}}\ N
\bigl(x, B(0,X_{l_0} )^c\bigr)\\
&\ \ge \limsup_m
\Bigl[1-2\Big((k_m+3)\delta^{(\psi_2(k_{m}))}+4l_{m}+2\Big)\Bigl(\dfrac{1}{j_{m}}+\dfrac{1}{\Delta^{(k_{m})}} \Bigr)\Bigr].
\end{align*}
Therefore, since  $j_{m}>\delta^{(\psi_2(k_{m+1}))}$, since $\psi_2(k_{m+1})>\psi_1(k_{m+1})>l_{m}\ge n_{k_{m}}\ge k_{m}$, since $\psi_2(k_{m+1})>\psi_2(k_{m})$ and since $l_m<n_{k_{m}+1}$, we can conclude from $\lim_{k\to\infty }\dfrac{k\delta ^{(k)}}{\delta 
^{(k+1)}}=0$ and $\lim_{k\to\infty }\ \dfrac{k\delta ^{(\psi_2(k))}+n_{k+1}}{\Delta 
^{(k)}}=0$ that 
\begin{align*}
&\overline{\textrm{dens}}\ N
\bigl(x, B(0,X_{l_0})^c\bigr)\\
&\ \ge \limsup_m\left[
1-\dfrac{2(k_m+3)\delta^{(\psi_2(k_{m}))}}{\delta^{(\psi_2(k_{m+1}))}}
-\dfrac{8(\psi_2(k_{m+1})-1)}{\delta^{(\psi_2(k_{m+1}))}}\right.\\
&\quad\quad\quad\quad
\left.-\dfrac{4}{j_{m}}
-\dfrac{2(k_m+3)\delta^{(\psi_2(k_{m}))}}{\Delta^{(k_{m})}}-\dfrac{8n_{k_{m}+1}}{\Delta^{(k_{m})}}-\dfrac{4}{\Delta^{(k_{m})}}\right]\\
&= 1
\end{align*}
The vector $x$ is thus not frequently hypercyclic since $\underline{\textrm{dens}}\ N\bigl(x, B(0,X_{l_0} )\bigr)=1-\overline{\textrm{dens}}\ N\bigl(x, B(0,X_{l_0} )^c\bigr)=0$ and $X_{l_0}>0$.
\end{proof}

We can now state and prove the main result of this section.

\begin{theorem}
There exists an operator on $\ell_1(\mathbb{N})$ which is $UFHC_a$ for every $a\in \mathcal{A}$ and not frequently hypercyclic.
\end{theorem}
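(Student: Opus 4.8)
The plan is to combine the two main theorems of this section, Theorem~\ref{UFHC} and Theorem~\ref{FHC}, by exhibiting a single choice of the free parameters $(\delta^{(k)})$, $(\tau^{(k)})$ and $(\Delta^{(k)})$ (together with the already-fixed combinatorial data $\psi$, $\varphi$, $v$, $w$, $b$ built from them) that simultaneously satisfies the hypotheses of both. Recall that Theorem~\ref{UFHC} requires only that $\delta^{(k)}-\tau^{(k)}\to\infty$, so that $T$ is $UFHC_a$ for every $a\in\mathcal{A}$; and Theorem~\ref{FHC} requires, with $\gamma_k=2^{k\delta^{(k-1)}+2n_k+1-\tau^{(k)}}$, that $\gamma_k\le\min\{2^{-16k},\min_{s\le k-1}\gamma_s^2\}$ for every $k\ge 1$, together with the two asymptotic conditions $k\delta^{(k)}/\delta^{(k+1)}\to 0$ and $(k\delta^{(\psi_2(k))}+n_{k+1})/\Delta^{(k)}\to 0$. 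So the whole proof reduces to checking that these conditions are not in conflict and can be met by a greedy recursive construction.

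First I would fix $\psi$ (hence $\varphi$, the $n_k$'s, and the values $\psi_1(k),\psi_2(k)$) once and for all as any map satisfying~\eqref{psi2} with the stated surjectivity-with-infinite-fibers property; note that $n_k\le n_1+\sum_{j<k}\psi_1(j)$ grows at most like $k^2$ (since $\psi_1(k)\le k$), so $n_k$ is polynomially bounded, which is all that matters. Then I would define the three sequences by recursion on $k$. The cleanest route is: choose $\tau^{(k)}$ first, then $\delta^{(k)}$, then $\Delta^{(k)}$. For the $\gamma_k$ condition, observe $\log_2\gamma_k=k\delta^{(k-1)}+2n_k+1-\tau^{(k)}$, so having chosen $\delta^{(k-1)}$ at the previous step, I simply pick $\tau^{(k)}$ large enough that $\log_2\gamma_k\le -16k$ and $\log_2\gamma_k\le 2\log_2\gamma_s$ for all $s\le k-1$ — this is a finite set of lower bounds on $\tau^{(k)}$, so it is satisfiable, and I also take $\tau^{(k)}>\tau^{(k-1)}$ so that $(\tau^{(k)})$ is increasing as required by the parameter restrictions. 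Next I choose $\delta^{(k)}$ increasing, with $\delta^{(k)}>\tau^{(k)}+k$ (so that $\delta^{(k)}-\tau^{(k)}\to\infty$, giving Theorem~\ref{UFHC}) and also $\delta^{(k)}>k\cdot k\,\delta^{(k-1)}$ (forcing $k\delta^{(k)}/\delta^{(k+1)}\le 1/(k+1)\to 0$); both are finitely many lower bounds on $\delta^{(k)}$, hence jointly satisfiable. Finally I choose $\Delta^{(k)}$ to be a multiple of $2\Delta^{(k-1)}$ (as the parameter restrictions demand, with $\Delta^{(0)}=b_1-b_0$) that is larger than, say, $k\big(k\delta^{(\psi_2(k))}+n_{k+1}\big)$ and also larger than $(k+3)\delta^{(\psi_2(k))}+4n_{k+1}+2$ (the inequality needed for the $w_j$ definition to make sense); rounding up to the next multiple of $2\Delta^{(k-1)}$ only increases it, so this is fine, and it yields $(k\delta^{(\psi_2(k))}+n_{k+1})/\Delta^{(k)}\le 1/k\to 0$.

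With all parameters so chosen, the operator $T=T_{v,w,\varphi,b}$ is a well-defined bounded operator on $\ell_1(\N)$ (the condition $\inf_n\prod_{j=b_n+1}^{b_{n+1}-1}|w_j|>0$ holds because these products equal $2^{k\delta^{(\psi_2(k))}}\ge 1$), Theorem~\ref{UFHC} applies to give that $T$ is $UFHC_a$ for every $a\in\mathcal{A}$, and Theorem~\ref{FHC} applies to give that $T$ is not frequently hypercyclic. This proves the claim. The only subtlety to be careful about — and really the one ``obstacle,'' though it is a mild one — is to verify that the lower bounds imposed on $\tau^{(k)}$, $\delta^{(k)}$ and $\Delta^{(k)}$ at step $k$ depend only on quantities already fixed at steps $\le k-1$ (and on the fixed data $\psi,n_\bullet$); one should double-check in particular that $\gamma_k$ depends on $\delta^{(k-1)}$, not $\delta^{(k)}$, and that the condition $k\delta^{(k)}/\delta^{(k+1)}\to 0$ is arranged when choosing $\delta^{(k+1)}$, so there is no circularity. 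Once that bookkeeping is in place the construction goes through with room to spare.

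\begin{proof}
Fix any map $\psi=(\psi_1,\psi_2):\N\setminus\{0\}\to\N\times\N$ satisfying~\eqref{psi2} and such that for every $i\ge 1$ there is $j_i$ with $\{k\ge 1:\psi(k)=(i,j)\}$ infinite for all $j\ge j_i$; this determines $\varphi$ and the sequence $(n_k)$, and since $\psi_1(k)\le k$ we have $n_k\le 1+\sum_{j=1}^{k-1}j$, so $(n_k)$ is polynomially bounded. We now construct increasing sequences of positive integers $(\tau^{(k)})_{k\ge 1}$, $(\delta^{(k)})_{k\ge 1}$ and a sequence $(\Delta^{(k)})_{k\ge 0}$ with $\Delta^{(0)}=b_1-b_0$ by recursion.

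Suppose $\tau^{(1)},\dots,\tau^{(k-1)}$ and $\delta^{(1)},\dots,\delta^{(k-1)}$ and $\Delta^{(0)},\dots,\Delta^{(k-1)}$ have been chosen (for $k=1$ there is nothing to assume beyond $\Delta^{(0)}$), and set, as in Theorem~\ref{FHC}, $\log_2\gamma_s=s\delta^{(s-1)}+2n_s+1-\tau^{(s)}$ for $s\le k-1$ (with $\delta^{(0)}$ irrelevant since it only appears through $\gamma_1$ where the coefficient is multiplied into $1\cdot\delta^{(0)}$; take $\delta^{(0)}=1$). Choose $\tau^{(k)}>\tau^{(k-1)}$ large enough that
\[
k\delta^{(k-1)}+2n_k+1-\tau^{(k)}\le \min\Bigl\{-16k,\ \min_{1\le s\le k-1}2\log_2\gamma_s\Bigr\},
\]
which is possible as the right-hand side is a fixed real number. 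Then $\gamma_k\le\min\{2^{-16k},\min_{s\le k-1}\gamma_s^2\}$. Next choose $\delta^{(k)}>\max\{\delta^{(k-1)},\tau^{(k)}+k\}$ and also $\delta^{(k)}>k^2\delta^{(k-1)}$. Finally choose $\Delta^{(k)}$ to be a multiple of $2\Delta^{(k-1)}$ with
\[
\Delta^{(k)}>\max\Bigl\{(k+3)\delta^{(\psi_2(k))}+4n_{k+1}+2,\ k\bigl(k\delta^{(\psi_2(k))}+n_{k+1}\bigr)\Bigr\},
\]
which is possible since $\psi_2(k)<n_{\psi_2(k)}$ and the values $\delta^{(1)},\dots$ needed here have indices $\le\psi_2(k)$; to be safe one may interleave the recursions so that $\delta^{(j)}$ for all $j\le\psi_2(k)$ is defined before $\Delta^{(k)}$, which is harmless because the definition of $\delta^{(j)}$ above never refers to any $\Delta$. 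All the parameter restrictions of the construction (in particular $(\tau^{(k)})$, $(\delta^{(k)})$ increasing, $(k+3)\delta^{(\psi_2(k))}+4n_{k+1}+2<\Delta^{(k)}$, and $\Delta^{(k)}$ a multiple of $2\Delta^{(k-1)}$) are met.

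Let $T=T_{v,w,\varphi,b}$ be the operator of C-type associated to these data. Since $\prod_{j=b_n+1}^{b_{n+1}-1}|w_j|=2^{k\delta^{(\psi_2(k))}}\ge 1$ for $n\in[n_k,n_{k+1})$, we have $\inf_n\prod_{j=b_n+1}^{b_{n+1}-1}|w_j|>0$, so $T$ is bounded on $\ell_1(\N)$. By construction $\delta^{(k)}-\tau^{(k)}\ge k\to\infty$, so Theorem~\ref{UFHC} shows $T$ is $UFHC_a$ for every $a\in\mathcal{A}$. Moreover $\gamma_k\le\min\{2^{-16k},\min_{s\le k-1}\gamma_s^2\}$ for every $k\ge 1$; from $\delta^{(k+1)}>(k+1)^2\delta^{(k)}$ we get $k\delta^{(k)}/\delta^{(k+1)}<1/(k+1)\to 0$; and from $\Delta^{(k)}>k(k\delta^{(\psi_2(k))}+n_{k+1})$ we get $(k\delta^{(\psi_2(k))}+n_{k+1})/\Delta^{(k)}<1/k\to 0$. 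Hence the hypotheses of Theorem~\ref{FHC} hold and $T$ is not frequently hypercyclic.
\end{proof}
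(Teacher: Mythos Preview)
Your proposal is correct and follows essentially the same approach as the paper: reduce the theorem to a compatibility check between the hypotheses of Theorem~\ref{UFHC} and Theorem~\ref{FHC}, then construct $(\tau^{(k)})$, $(\delta^{(k)})$, $(\Delta^{(k)})$ greedily so that all constraints are met. The only presentational difference is that the paper avoids the bookkeeping issue you flag (that $\Delta^{(k)}$ depends on $\delta^{(\psi_2(k))}$ with $\psi_2(k)$ possibly much larger than $k$) by running the construction in two clean phases---first fix the entire sequence $(\tau^{(k)},\delta^{(k)})_{k}$ by recursion, then fix $(\Delta^{(k)})_{k}$---rather than your single interleaved recursion; your fix (``interleave so that $\delta^{(j)}$ is defined for $j\le\psi_2(k)$ before choosing $\Delta^{(k)}$, since $\delta^{(j)}$ never refers to any $\Delta$'') is correct and amounts to exactly the same thing.
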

\begin{proof}
Let $\psi(k)=(\psi_1(k),\psi_2(k))$ such that for every $k\ge 1$, 
\[
1\le \psi_1(k)< \min\{k+1,\psi_2(k)\}\quad\text{and}\quad \psi_2(k)> \max\{\psi_2(j):j< \psi_1(k)\}
\] and such that for every $i\ge 1$, there exists $j_i$ such that for every $j\ge j_i$, the set $\{k:\psi(k)=(i,j)\}$ is infinite. Let $n_0=0$, $n_1=1$ and $n_{k+1}=n_k+\psi_1(k)$ for every $k\ge 1$. For every $k\ge 1$, every $n\in [n_k,n_{k+1})$, we then let $\varphi(n)=n-n_k$, $
v_n=v^{(k)}=2^{-\tau^{(\psi_2(k))}}$, $b_{n+1}-b_n=\Delta^{(k)}$ and for every $j\in (b_n,b_{n+1}-1]$,
\[
w_j=
\begin{cases}
  2 & \quad\text{if}\ \ b_n< j\le b_n+k\delta^{(\psi_2(k))}+2n+1\\
 1 & \quad\text{if}\ \ b_n+k\delta^{(\psi_2(k))}+2n+1< j<\Delta^{(k)}-3\delta^{(\psi_2(k))}-2n-1\\
 1/2 & \quad\text{if}\ \  \Delta^{(k)}-3\delta^{(\psi_2(k))}-2n-1\le i 
<\Delta^{(k)}-2\delta^{(\psi_2(k))}\\
 2 & \quad\text{if}\ \ \Delta^{(k)}-2\delta^{(\psi_2(k))}\le i< \Delta^{(k)}-\delta^{(\psi_2(k))}\\
 1& \quad\text{if}\ \ \Delta^{(k)}-\delta^{(\psi_2(k))}\le i<\Delta^{(k)}
\end{cases}
\]
where $(k+3)\delta^{(\psi_2(k))}+4n_{k+1}+2<\Delta^{(k)}$ for every $k\ge 1$, $(\delta^{(k)})$ and $(\tau^{(k)})$ are increasing sequence of positive integers and for every $k\ge 1$, $\Delta^{(k)}$ is a multiple of $2\Delta^{(k-1)}$ with $\Delta^{(0)}=b_1-b_0$.
Let $\gamma_k:=2^{k\delta^{(k-1)}+2n_{k}+1-\tau^{(k)}}$.
Suppose that 
\begin{enumerate}
\item $\lim_{k\to \infty} \delta^{(k)}-\tau^{(k)}=\infty$;
\item for every $k\ge 1$, $\gamma_k\le \min\{2^{-16k}, \min_{s\le k-1}\gamma^2_{s}\}$;
\item $\lim_{k\to\infty }\dfrac{k\delta ^{(k)}}{\delta^{(k+1)}}=0$;
\item  $\lim_{k\to\infty }\ \dfrac{k\delta ^{(\psi_2(k))}+n_{k+1}}{\Delta 
^{(k)}}=0$.
\end{enumerate}
Then the operator of C-type $T_{v,w,\varphi,b}$ is $UFHC_a$ for every $a\in \mathcal{A}$ by Theorem~\ref{UFHC} but not frequently hypercyclic by Theorem~\ref{FHC}, and it suffices to remark that Conditions $(1)-(3)$ can be simultaneously satisfied by chosing by induction
\begin{itemize}
\item $\tau^{(k)}$ sufficiently big so that $\gamma_k\le \min\{2^{-16k}, \min_{s\le k-1}\gamma^2_{s}\}$, and then
\item $\delta^{(k)}$ sufficiently big so that $\delta^{(k)}-\tau^{(k)}\ge k$ and $\frac{(k-1)\delta^{(k-1)}}{\delta^{(k)}}\le \frac{1}{k}$,
 \end{itemize}
and that when the sequence $(\delta^{(k)})_k$ has been completely fixed, the remaining conditions can be satisfied by chosing for every $k\ge 1$
\begin{itemize}
\item $\Delta^{(k)}$ sufficiently big so that $\Delta^{(k)}>(k+3)\delta^{(\psi_2(k))}+4n_{k+1}+2$, $\Delta^{(k)}$ is a multiple of $2\Delta^{(k-1)}$ and $\dfrac{k\delta^{(\psi_2(k))}+n_{k+1}}{\Delta^{(k)}}\le \frac{1}{k}$.
\end{itemize}
\end{proof}

\end{document}